\newcommand{\mc}{\mathcal}
\newcommand{\mb}{\mathbb}
\newcommand{\R}{\mb R}
\newcommand{\N}{\mb N}
\newcommand{\Z}{\mb Z}
\newcommand{\T}{\mb T}
\newcommand{\eea}{\end{align}}
\renewcommand{\epsilon}{\varepsilon}
\renewcommand{\bar}{\overline}
\renewcommand{\tilde}{\widetilde}
\renewcommand{\phi}{\varphi}
\renewcommand\upsilon{\theta}
\newtheorem{theorem}{Theorem}[section]
\newtheorem{corollary}[theorem]{Corollary}
\newtheorem{lemma}[theorem]{Lemma}
\newtheorem{proposition}[theorem]{Proposition}
\theoremstyle{definition}
\newtheorem{definition}[theorem]{Definition}
\newtheorem{remark}[theorem]{Remark}
\newtheoremstyle{algorithm}
{4pt}
{4pt}
{}
{}
{}
{:}
{\newline}
{}
\newtheorem{algorithm}{Algorithm}
\newcommand{\balgorithm}{\begin{algorithm}\begin{framed}\ }
\newcommand{\ealgorithm}{\end{framed}\end{algorithm}}
\newcommand{\bd}{\begin{definition}}
\newcommand{\ed}{\end{definition}}
\newcommand{\bt}{\begin{theorem}}
\newcommand{\et}{\end{theorem}}
\newcommand{\bp}{\begin{proposition}}
\newcommand{\ep}{\end{proposition}}
\newcommand{\bc}{\begin{corollary}}
\newcommand{\ec}{\end{corollary}} 
\newcommand{\bl}{\begin{lemma}}
\newcommand{\el}{\end{lemma}}
\newcommand{\br}{\begin{remark}}
\newcommand{\tw}{\mathrm}
\newcommand{\er}{\end{remark}}
\DeclareMathOperator{\Id}{Id}
\begin{document}

\title[Coupling of chaotic and gradient systems]{Nonuniformly hyperbolic systems arising from coupling of chaotic and  gradient-like systems}
\author{Matteo Tanzi} 
\address{Courant Institute of Mathematical Sciences, New York University, New York, NY 10012, USA}
\author{Lai-Sang Young}
\address{Courant Institute of Mathematical Sciences, New York University, New York, NY 10012, USA, and Institute for Advanced Study, Princeton, New Jersey 08540, USA}
\thanks{\emph{2010 Mathematics Subject Classification.}  Primary: 37C40; Secondary: 37D25, 37D30.}
\thanks{\emph{Key words and phrases.} Coupled dynamical systems, nonuniformly hyperbolic systems,
SRB measures.\looseness=-9}
\thanks{LSY was supported in part by NSF Grant DMS 1901009}

\maketitle

\begin{abstract}
We investigate  dynamical systems obtained by coupling two maps, one of which is chaotic and is exemplified by an Anosov diffeomorphism, and the other is of gradient type and is exemplified by a N-pole-to-S-pole map of the circle. Leveraging techniques from the geometric and
ergodic theories of hyperbolic systems, we analyze three different ways of coupling together the two
maps above. For weak coupling, we offer an addendum to existing theory showing that almost
always the attractor has fractal-like geometry when it is not normally hyperbolic. Our main results
are for stronger couplings in which the action of the Anosov diffeomorphism on the circle map
has certain monotonicity properties. Under these conditions, we show that the coupled systems
have invariant cones and possess SRB measures even though there 
are genuine obstructions to uniform hyperbolicity.
\end{abstract}

\section{Introduction}
Coupled dynamics occur naturally in
the modeling of many systems of broad interest in contemporary science,  see e.g. \cite{PRK01,SS93, K75}. 
Large and complicated models of real-world systems can often be decomposed into smaller, more
tractable subsystems that interact with one another. Studying these constituent
subsystems and their interactions may offer a way to gain insight into the
larger system. Moreover, a wealth of new examples can
be obtained by coupling together dynamical systems with known properties
and by leveraging knowledge of the subsystems to describe the composite system.

Systems of coupled maps have  been extensively investigated, first numerically, e.g. \cite{K93}, and later theoretically within the framework of ergodic theory; see
\cite{CF05}. Theoretical investigations started with the study of infinite lattices of (weakly interacting) coupled maps, first by Bunimovich and Sinai \cite{BS88},  followed by numerous other authors, see e.g.\ \cite{BD-EIJK98,BK96,FR00,KK92,KL06}. The analysis of coupled maps was later extended to include finite size networks with variable interaction graphs \cite{BLMCH06,KY10, PerTanzHeterCoupl}.
In a majority of the results above, the constituent subsystems are copies of 
a single map (e.g.\ circle rotation, expanding map, piecewise
expanding maps and the like). Moreover, the coupling strength is often -- though not always -- assumed to be sufficiently weak so that the dynamics resemble those of the uncoupled system;
see eg.\ \cite{BKZ09,F20,SB16}.

 While homogeneous materials in physics inspired the study of systems in which 
identical maps are coupled, 
there are many examples from e.g. biology where the components of
a network represent different substances (e.g. enzymes and substrates) or interacting agents (e.g. neurons)  with distinct characteristics and functions, and some of these components
 can influence others in very substantial ways (e.g. \cite{ dJ02, wang2009metabolism, young2020towards}).
These examples suggest the investigation of {\it inhomogeneous networks}, inhomogeneous
in the sense that 
the constituent subsystems may be unequal and the coupling among them not necessarily
symmetric.

This paper is a small step in the direction of inhomogeneous networks.
It analyzes the coupling of two maps that are in some sense at opposite ends of the 
dynamical spectrum.  Individually, both maps are very simple and much studied:
one is an Anosov diffeomorphisms of the two-torus, representing  chaotic systems, and the other is a N-pole-to-S-pole map of the circle, representing orderly, gradient-like dynamics. 

As we will show,  the coupled system can exhibit interesting behavior 
that depends
crucially on the nature of the interaction between the constituents.  
We will consider three types of coupling: (i) weak couplings,
by which we mean small perturbations of the  uncoupled system, (ii) regular couplings
in which the Anosov map acts on the gradient circle map in a regular way
by rotations that can be quite large, and (iii) rare but strong interactions  similar to 
those in (ii) except that the two subsystems do not interact most of the time and the
interaction is more ``dramatic" when it occurs. 
We identify natural conditions on the coupling
under which we prove that the coupled system has nice statistical properties including the
existence of SRB and physical measures, even though the geometric picture can be 
wild.

In a broad sense, the goal of this paper is to  promote the use of 
dynamical systems with known properties as building blocks for larger and more complex systems; the
constituent subsystems need not be copies of the same map, and the coupling can be strong
or weak. This is an excellent way to create many high-dimensional examples that are
novel and potentially within the reaches of analytical techniques.
Closer to the content of the present paper are examples of nonuniformly hyperbolic systems,
often-mentioned candidates of which are the standard map, 
H\'enon maps 
\cite{benedicks1991dynamics, benedicks1993sinai} or more generally rank-one attractors 
\cite{wang2008toward, lu2013strange}. But 
there are also many naturally occurring examples of nonuniformly hyperbolic systems 
that are more amenable to analysis as our study demonstrates.
These examples tend to have certain monotonicity properties which lead to invariant cones,
but may have obstructions to uniform hyperbolicity as is the case with
 couplings (ii) and (iii) in this paper. 

Finally, we mention that parts of our results have overlaps with \cite{B18,H12, Y93} but our constructions are more general and more generalizable; see Sect. \ref{Sec:LiterRev} for the relations of this paper with the the literature on hyperbolic dynamics,  and Sect. \ref{Sec:Generalizations} for a discussion of further results covered by our analysis.

\smallskip
\paragraph{\bf Acknowledgments: } the authors are grateful to Bastien Fernandez for many useful discussions.


\section{Setup  and overview}

In Sect. \ref{Sec:Const} we set some notation and introduce the two maps to be coupled.
In Sect. \ref{Sec:Overv} we discuss the couplings and give an overview of the results.

\subsection{ The two constituents: an Anosov diffeomorphism and a gradient-like map}\label{Sec:Const}

\ \medskip  

In what follows $\T=\R/\Z$ is the unit circle positively parametrized and $\T^2$ is the two-dimensional
torus. The systems  to be coupled are a $C^2$  Anosov diffeomorphism $A:\mb T^2\circlearrowleft$
and a $C^2$ map $g:\mb T\circlearrowleft$  that resembles the time-$t$ map of a 
``north-south flow"  of a gradient-like vector field (see below).  

Anosov diffeomorphisms are characterized by the presence of uniform expansion
and contraction everywhere on their phase spaces. Here we let 
$T_{\tw p}\T^2= E_A^s(\tw p)\oplus E_A^u(\tw p)$ be the invariant splitting 
of the tangent space at $\tw p \in \T^2$, i.e. 
 \[
DA_{\tw p} E_A^s(\tw p)=E_A^s(A\tw p)\quad\mbox{and}\quad DA_\tw p E_A^u(\tw p)=  E_A^u(A\tw p),
 \]
where $DA_{\tw p}$ is the differential of $A$ at $\tw{p}$,    and for simplicity we assume that 
the expansion and contraction occur in one step, i.e., there exists
$\lambda_A < 1$ such that for all $\tw p \in \T^2$,
\begin{eqnarray*}
\|DA_{\tw p}w\|\le \lambda_A\|w\| \quad & \mbox{for all } & w\in E_A^{s}(\tw p) \\
\|D(A^{-1})_{\tw p}v\|\le \lambda_A\|v\| \quad & \mbox{for all } & v\in E_A^{u}(\tw p) .
\end{eqnarray*}
Stable and unstable manifolds are defined everywhere on $\T^2$, and it is a known fact
that all Anosov diffeomorphisms of $\T^2$ are topologically transitive \cite{hasselblatt2002handbook}.

As for the map $g$, we assume it has two fixed points, one repelling, 
called $z_r$, and the other one attractive, called $z_a$. 
We restrict our attention to the case  where $g$ is orientation preserving so that $g'>0$ with respect to the parametrization above, and let
 \[
 \lambda_g^{max}:=\max g'=g'(z_r) \quad\mbox{ and }\quad \lambda_g^{min}:=\min g'=g'(z_a).
 \]  
We assume also that all orbits originate from $z_r$ and are attracted to $z_a$,
i.e., for all $z \in \T \setminus \{z_a, z_r\}$, 
$g^n z \to z_a$, and $g^{-n}z \to z_r$ as $n \to \infty$.  

The uncoupled system is $F_0:\T^2\times \T\circlearrowleft $ given by
\begin{equation}\label{Eq:UncMap}
F_0(x,y,z)=\left(A(x,y), g(z)\right).
\end{equation}
 We will refer to $A$ as the \emph{base map}, 
$g$ as the {\it fiber map}, and for 
$(x,y,z)\in\T^2\times \T$ we call $(x,y) \in \T^2$ the 
\emph{horizontal} component and $z \in \T$ the \emph{vertical} component.

It follows that the uncoupled system $F_0$ has a uniformly hyperbolic attractor at 
$\T^2 \times \{z_a\}$, and a uniformly hyperbolic repellor at $\T^2 \times \{z_r\}$.

\subsection{Couplings  and overview of results}\label{Sec:Overv}

\ \medskip

Our first results are for {\it weak couplings}, which translate
into small perturbations  of the uncoupled map $F_0$. We will focus on the effect of the coupling on 
$\Lambda_0:=\T^2 \times \{z_a\}$, the uniformly hyperbolic attractor of $F_0$. 
 The theory of such attractors is well established including their persistence under small
perturbations. Here we add only the fact that when $\Lambda_0$ is not normally hyperbolic,
most perturbations lead to attractors with fractal-like geometry. This is discussed in 
Sect. \ref{Sec:SmallInte}.

Then we move to stronger couplings,  considering the case where fiber dynamics
are driven strongly by the base map $A$ but feedback to $A$ is weak. That is to say, 
we consider small perturbations of maps $F : \T^2\times \T \circlearrowleft$ of the form
\begin{equation} \label{skewprod}
F(x,y,z) = (A(x,y), g_{(x,y)}(z))
\end{equation}
 where for each $(x,y) \in \T^2$, $g_{(x,y)}: \T \circlearrowleft$ is a diffeomorphism mapping
the fiber at $(x,y)$ to the fiber at $A(x,y)$; the maps $g_{(x,y)}$ vary with $(x,y)$. 
 One-sided couplings such as that in (\ref{skewprod}) in which the dynamics 
of the base drive the dynamics on fibers but not {\it vice versa} are called {\it skew products}.

In Sects. \ref{Sect:RegMonotINtGeom} and \ref{Sec:RegMonotStatist}, we study examples 
with  $g_{(x,y)} = g \circ r$ where $g$ is as defined in Sect. \ref{Sect:DynGeomDesc} and
$r$ is a rotation of the fiber by an amount depending on $(x,y)$.
Provided that $r$ varies monotonically along the unstable directions of $A$ (see Sect. \ref{Sec:StandingAssum} for precise conditions), we show that these maps satisfy a
domination condition,  even though in the ``central" direction the derivative is sometimes
expanding and sometimes contracting. Under suitable conditions, we prove the existence
of an open set of nonuniformly hyperbolic maps with SRB measures.

In Sect \ref{Sec:RareStrongInterac} we also consider monotonic rotations, but here 
$A$ and $g$ do not interact most of the time, in the sense that $r$, the amount rotated, is identically
equal to zero for most $(x,y) \in \T^2$ and it climbs steeply from $0$ to $1$ when the two
maps do interact. 
We call these {\it rare but strong} interactions. These systems are farther away from  uniformly hyperbolic systems than those studied in Sects. \ref{Sect:RegMonotINtGeom} and \ref{Sec:RegMonotStatist}, but we prove that they possess SRB measures nevertheless.

To summarize, we consider in this paper three couplings the first one of which produces
a uniformly hyperbolic attractor and the second and third lead to
dynamical pictures that remain tractable but are successively farther from uniform hyperbolicity.
These results are presented for two constituent maps $A$ and $g$ for definiteness, 
but our proofs in fact apply with  
minor modifications to a number of other situations some of which will be discussed
in Section \ref{Sec:Generalizations}.


\section{Small Interactions}\label{Sec:SmallInte}

\subsection{Dynamical and geometrical description}\label{Sect:DynGeomDesc}

\ \medskip

The case of a small interaction between $A$ and 
$g$ is
described by 
a diffeomorphism $F : \T^2\times \T\circlearrowleft$ that is a small
perturbation of $F_0$  as defined in Eq. \eqref{Eq:UncMap}, and
recall that $\Lambda_0$ is the attractor $\T \times \{z_a\}$ for $F_0$.

\medskip
\begin{theorem}[ mostly \cite{robinson1976structural, ruelle1976measure}]\label{Thm:SmallPert}
Let $F$ be a sufficiently small $C^1$ perturbation of $F_0$. Then:
\begin{itemize}
\item[(i)] $F$ has a uniformly hyperbolic attractor $\Lambda$ near $\Lambda_0$;
\item[(ii)] $F|_{\Lambda}$ is topologically conjugate to $F_0|_{\Lambda_0}$;
\item[(iii)] if $F$ is $C^{1+\epsilon}$ for some $\epsilon>0$, then it admits  a unique SRB measure $\mu$
supported on $\Lambda$, and 
 there is a full Lebesgue measure set $V \subset \T^2 \times \T$ 
such that the following holds: Let $\varphi: \T^2 \times \T \to \R$ be
any continuous function. Then
\[
\lim_{n\rightarrow\infty}\frac1n \sum_{i=0}^{n-1} \varphi(F^ip)=\int_\Lambda \varphi d\mu \quad\quad\forall p\in V.
\]
\end{itemize}
\end{theorem}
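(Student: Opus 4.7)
The plan is to verify that $\Lambda_0=\T^2\times\{z_a\}$ is a topologically transitive uniformly hyperbolic attractor for $F_0$, and then to invoke the classical persistence and Sinai--Ruelle--Bowen theorems under $C^1$ (respectively $C^{1+\epsilon}$) small perturbations.

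First I would exhibit the hyperbolic splitting on $\Lambda_0$. At $(\tw p,z_a)\in \Lambda_0$, set $E^u(\tw p,z_a):=E^u_A(\tw p)$ and $E^s(\tw p,z_a):=E^s_A(\tw p)\oplus T_{z_a}\T$; since $DF_0=DA\oplus g'(z_a)$, the direction $E^u$ is expanded by at least $\lambda_A^{-1}>1$ and $E^s$ is contracted by at most $\max(\lambda_A,\lambda_g^{min})<1$, using crucially that $\lambda_g^{min}=g'(z_a)<1$ because $z_a$ is attracting. A trapping neighborhood is obtained by choosing an open interval $I\ni z_a$ with $g(\bar I)\subset \mathrm{int}(I)$ — possible since $z_a$ attracts every orbit in $\T\setminus\{z_r\}$ — and setting $U:=\T^2\times I$, so that $F_0(\bar U)\subset \mathrm{int}(U)$ and $\Lambda_0=\bigcap_{n\ge 0}F_0^n(U)$.

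For parts (i) and (ii) I would invoke the classical persistence theory of hyperbolic attractors (Hirsch--Pugh--Shub, Robinson). The trapping condition is $C^1$-open, so for $F$ sufficiently $C^1$-close to $F_0$ the set $\Lambda:=\bigcap_{n\ge 0}F^n(U)$ is a nonempty attractor near $\Lambda_0$. Uniform hyperbolicity is $C^1$-open on compact invariant sets, so $\Lambda$ inherits a hyperbolic splitting close to that of $\Lambda_0$, yielding (i); the topological conjugacy $h:\Lambda_0\to\Lambda$ in (ii) is then produced by shadowing, as the unique Hölder homeomorphism intertwining $F_0|_{\Lambda_0}$ and $F|_\Lambda$.

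For part (iii), observe that $F_0|_{\Lambda_0}$ is conjugate via projection on $\T^2$ to the Anosov map $A$, which is topologically transitive by \cite{hasselblatt2002handbook}; composing with $h^{-1}$ transfers transitivity to $F|_\Lambda$. Since $F$ is $C^{1+\epsilon}$, the Sinai--Ruelle--Bowen theorem for topologically transitive hyperbolic attractors furnishes a unique SRB measure $\mu$ on $\Lambda$ together with a set $V_0\subseteq W^s(\Lambda)$ of full Lebesgue measure in the basin $W^s(\Lambda)$ on which Birkhoff averages of continuous functions converge to $\int \varphi\, d\mu$. The place where care is needed — indeed the main obstacle — is upgrading $V_0$ to a full Lebesgue measure subset of $\T^2\times\T$: for $F_0$ the complement of $W^s(\Lambda_0)$ is exactly the Lebesgue null set $\T^2\times\{z_r\}$, and to transfer this to $F$ I would appeal to structural stability of the Axiom A decomposition, noting that $\T^2\times\{z_r\}$ is a locally maximal uniformly hyperbolic invariant set for $F_0$ whose normal (vertical) direction is purely expanding, so it persists as a nearby compact hyperbolic invariant set $\Lambda_r$ for $F$; since $(\T^2\times\T)\setminus W^s(\Lambda)\subseteq W^s(\Lambda_r)$ and the persistence of the strong vertical expansion forces the stable lamination of $\Lambda_r$ to remain of Lebesgue measure zero, we conclude that $V:=V_0$ has full Lebesgue measure in $\T^2\times\T$.
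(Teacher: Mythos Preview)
Your proposal is correct and follows essentially the same route as the paper. Both arguments cite the classical persistence/structural stability results for (i)--(ii), invoke Sinai--Ruelle--Bowen for the SRB measure on the perturbed attractor, and isolate as the only nontrivial point the fact that the complement of the basin has Lebesgue measure zero; both handle this by identifying that complement with the hyperbolic repellor near $\T^2\times\{z_r\}$.

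The only place where the paper is crisper is the last step. You phrase it as ``$(\T^2\times\T)\setminus W^s(\Lambda)\subseteq W^s(\Lambda_r)$ and persistence of the strong vertical expansion forces the stable lamination of $\Lambda_r$ to remain of Lebesgue measure zero.'' The paper instead observes directly that $(\T^2\times\T)\setminus\mathcal B(\Lambda)$ \emph{equals} the repellor $\Lambda_r$ (this follows because the complement of the basin is a closed $F$-invariant set contained in the complement of the trapping neighborhood, hence equals the maximal invariant set there), and then cites the general fact that uniformly hyperbolic repellors have Lebesgue measure zero, referring to \cite{young1990large}. Your ``strong vertical expansion'' heuristic is pointing at the right mechanism, but it would be cleaner to state it as you just did implicitly: the stable lamination of $\Lambda_r$ coincides with $\Lambda_r$ itself (since $E^s$ is tangent to $\Lambda_r$), and $\Lambda_r$ is a hyperbolic repellor, hence Lebesgue-null.
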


\begin{proof}
(i) and (ii) are classical results \cite{robinson1976structural} (see also \cite{hasselblatt2002handbook}). For (iii), 
The existence of an SRB measure supported on $\Lambda$ was proved in \cite{ruelle1976measure}.
By the absolute continuity of the stable foliation  \cite{hirsch1977invariant}, there is a set 
$V$ with the properties above occupying a full Lebesgue measure subset of $\mathcal B(\Lambda)$, the basin of attraction of $\Lambda$. 
To complete the proof, it remains  to show that $(\T^2 \times \T) \setminus \mathcal B(\Lambda)$ 
has Lebesgue measure zero. By an analogous argument applied to $F^{-1}$, we know that 
$(\T^2 \times \T) \setminus \mathcal B(\Lambda)$ is a uniformly hyperbolic repellor, and such
repellors are known to have 
Lebesgue measure zero; see  e.g.  \cite{young1990large}.
\end{proof}

\smallskip

While the dynamics are essentially unchanged in the sense that $F|_\Lambda$ is topologically
conjugate to $F_0|_{\Lambda_0}$, the geometry of the attractor $\Lambda$ 
can be quite different than that of $\Lambda_0$. 
As we will show, what determines the geometry of $\Lambda$ is
the relative strengths of
contraction in the Anosov map $A$ and at the sink of the fiber map $g$. We distinguish 
 between the following two cases:

\medskip \noindent
1. {\it  Normally hyperbolic attractor.} Assume
\[
\min \{\|DA_{\tw p} v\|: \tw p \in \T^2, v \in E_A^s(\tw p), \|v\|=1\} > \lambda_g^{min}\ .
\]
Then $\Lambda_0$ is normally hyperbolic under $F_0$, and it is a well known fact
that normally hyperbolic manifolds persist under small perturbations \cite{hirsch1977invariant}. That is,
for $F$ sufficiently close to $F_0$ in the $C^1$-sense, $\Lambda$ is again diffeomorphic to
a smooth 2D torus.

\bigskip \noindent
2. {\it Attractors with wild geometry.} Assume
\begin{equation} \label{domination}
\alpha_{max}^s:= \max \{\|DA_{\tw p} v\|: \tw p \in \T^2, v \in E_A^s(\tw p), \|v\|=1\} < \lambda_g^{min}\ .
\end{equation}

 When the perturbed map $F$ has a skew-product structure, it has been shown
 that the attractor $\Lambda$ can be the graph of a nowhere 
differentiable function with fractal dimension; see  \cite{kaplan1984lyapunov}, and later  
\cite{hadjiloucas2002regularity}. Below we prove a result along similar lines 
for small perturbations that allow feedback, i.e., the perturbed map $F$ is not
necessarily a skew-product.

\medskip
 The rest of this section is about case 2. We thus work 
under the assumption that $\alpha_{max}^s < \lambda_g^{min}$. 
For the uncoupled map $F_0$, there is a splitting  
of the tangent space at $p \in \Lambda_0$ into 
\begin{equation} \label{splitting}
T_p (\T^2 \times \T) = E^s(p) \oplus E^c(p) \oplus E^u(p)
\end{equation}
where $E^s = E^s_A \times \{0\}$, $E^u = E^u_A \times \{0\}$, and $E^c$ is
in the vertical direction. We have called $E^c$ the ``central direction" 
but $DF|_{E^c}$ is in fact strictly contracting though not as strongly as 
$DF|_{E^s}$. 
It follows from the stability of uniformly hyperbolic splittings that  for $F$
sufficiently near $F_0$ in the $C^1$-sense, the splitting in (\ref{splitting}) persists on $\Lambda$, 
with
\begin{eqnarray*}
\max_{p \in \Lambda} \|DF_p|_{E^s(p)}\|
& < & \min_{p \in \Lambda} \|DF_p|_{E^c(p)}\|\\
\mbox{and} \qquad \max_{p \in \Lambda} \|DF_p|_{E^c(p)}\| & < & 1 \ < \ 
\min_{p \in \Lambda} \|DF_p|_{E^u(p)}\|\ .
\end{eqnarray*}

The following notation will be used.
For $p_* \in \T^2 \times \T$, we let $B_\epsilon(p_*)$ denote
the $\epsilon$-neighborhood of $p_*$ in $\T^2 \times \T$. In local analysis
we identify $\T^2 \times \T$ with $\mathbb R^3$, and for
$p_1, p_2 \in \T^2 \times \T$, we let $\overrightarrow{p_1p_2}$ be the 
vector from $p_1$ to $p_2$. Let Diff$^1(\T^2 \times \T)$ be
the space of $C^1$ diffeomorphisms of $\T^2 \times \T$ onto itself,
endowed with the $C^1$
topology.

\medskip
\begin{theorem}\label{Thm:SmallPertGeom}
Assume $\alpha_{max}^s < \lambda_g^{min}$. Then there is 
a neighborhood $\mathcal N$ of $F_0$ in Diff$^1(\T^2 \times \T)$ and 
an open and dense subset $\mc N' \subset \mc N$, 
such that for all $F \in \mathcal N'$, the attractor has
the following property:

\medskip \noindent
(*) \ \ for every $p_*\in\Lambda$ and every $\epsilon,\delta>0$, $\exists p_1, p_2\in B_\epsilon(p_*) \cap \Lambda$ s.t. $\measuredangle\left(\overrightarrow{p_1p_2}, E^c(p_*)\right)<\delta$.
\end{theorem}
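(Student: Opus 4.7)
My plan is to identify an open and dense condition on $F$ under which $\Lambda$ contains a pair of distinct points on a common local center-stable leaf with non-trivial $E^c$-component, and then use the topological transitivity of $A$ together with the domination $\alpha_{max}^s < \lambda_g^{min}$ to broadcast the resulting near-vertical chords to every point of $\Lambda$.

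First I would set up the requisite geometric scaffolding. For $F$ in a sufficiently small $C^1$-neighborhood $\mc N$ of $F_0$, the persisted splitting $E^s\oplus E^c\oplus E^u$ and a center-stable foliation $W^{cs}$ tangent to $E^s\oplus E^c$ exist on a neighborhood of $\Lambda$; integrability of $E^{cs}$ holds trivially for $F_0$ (leaves $W^s_A\times\T$) and persists under $C^1$-perturbation in this setting. Moreover, since $\Lambda$ is $C^0$-close to $\Lambda_0 = \T^2\times\{z_a\}$ and $\pi_{xy}|_{\Lambda_0}$ is a diffeomorphism, $\pi_{xy}|_{\Lambda}$ is a homeomorphism for $F\in\mc N$, so $\Lambda$ is the graph of a continuous $\phi_F:\T^2\to\T$; two distinct points of $\Lambda$ lie on a common $W^{cs}$-leaf with a non-trivial height difference iff $\phi_F$ is non-constant along some local $W^s_A$-leaf.

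Let $\mc N'\subset\mc N$ be the set of $F$ satisfying the above pair condition. Openness is immediate from continuous dependence of $\Lambda$, the splitting, and $W^{cs}$ on $F$. Density is established by explicit construction: given any $F\in\mc N$ and any chosen leaf $\ell\subset\T^2$ of $W^s_A$, a $C^1$-small bump perturbation of $F$ modifying the fiber dynamics non-trivially along $\ell$ produces two points of the perturbed attractor on a common $W^{cs}$-leaf at different heights. This is transparent when $F$ is a skew product (add a small $(x,y)$-dependent bump to $g$ supported near $\ell$), and the general case reduces to it by an implicit-function argument.

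For the main assertion, fix $F\in\mc N'$ with witnesses $p^0\neq q^0$. The chord $\overrightarrow{p^0 q^0}$ lies in $E^s(p^0)\oplus E^c(p^0)$ up to an $E^u$-term of order $\|p^0-q^0\|^{1+\alpha}$ coming from the $C^{1+\alpha}$ regularity of the $W^{cs}$-leaf. Iterating with $F^n$ and using
\[
\max_{p\in\Lambda}\|DF_p|_{E^s(p)}\| < \min_{p\in\Lambda}\|DF_p|_{E^c(p)}\| < 1,
\]
the ratio of the $E^s$- to the $E^c$-component of the iterated chord decays as $(\alpha_{max}^s/\lambda_g^{min})^n$, while the $E^u$-component stays negligible because $|F^n p^0-F^n q^0|\to 0$ exponentially (points on a common $W^{cs}$-leaf contract) and the chord remains near the tangent plane to $W^{cs}$ at $F^n p^0$. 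Hence $\measuredangle(\overrightarrow{F^n p^0, F^n q^0}, E^c(F^n p^0))\to 0$. Since $F|_{\Lambda}$ is conjugate by Theorem~\ref{Thm:SmallPert}(ii) to the topologically transitive $A$, the forward orbit of $p^0$ in $\Lambda$ is dense: given any $p_*\in\Lambda$ and $\epsilon,\delta>0$, I would choose $n$ so that both $F^n p^0, F^n q^0 \in B_\epsilon(p_*)$ and, using continuity of $E^c$ near $p_*$, so that $\measuredangle(\overrightarrow{F^n p^0, F^n q^0}, E^c(p_*))<\delta$. The main obstacle I anticipate is the density step in the genuine-feedback case: producing a non-trivial $E^c$-separation within a single $W^{cs}$-leaf of the perturbed attractor requires tracking the interaction of a localized $C^1$-bump with the global, only Hölder-regular, attractor and its center-stable foliation, and would likely proceed by an implicit-function argument on a Banach space of candidate graphs.
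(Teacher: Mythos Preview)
Your overall strategy matches the paper's: locate a pair of points in $\Lambda$ on a common $W^{cs}$-leaf with non-trivial $E^c$-offset, iterate forward so that domination kills the $E^s$-component of the chord, and use transitivity of $F|_\Lambda$ to broadcast the resulting near-$E^c$ chords everywhere. However, there are two genuine gaps.

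\medskip
\textbf{The transitivity step is wrong as stated.} You fix specific witnesses $p^0,q^0$ coming from your definition of $\mc N'$ and then assert that ``the forward orbit of $p^0$ in $\Lambda$ is dense'' because $F|_\Lambda$ is conjugate to the transitive map $A$. Transitivity does not make \emph{every} orbit dense; $A$ has periodic points. The paper closes this gap with a dichotomy (Proposition~\ref{Prop:strongstable}): if $W^s(p)\subset\Lambda$ for \emph{some} $p\in\Lambda$, then $W^s(q)\subset\Lambda$ for \emph{all} $q\in\Lambda$. Consequently, in $\mc N'$ one can first pick $p_0\in\Lambda$ with a dense forward orbit and \emph{then} find a partner $q_0\in\Lambda\cap W^{cs}_{\mathrm{loc}}(p_0)\setminus W^s(p_0)$, because the non-containment condition holds at every point. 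Your formulation of $\mc N'$ (``there exist $p^0,q^0$'') in fact defines the same set, but you never prove or use this equivalence, and without it your broadcast step fails.

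\medskip
\textbf{The density argument is not a proof.} You acknowledge this yourself, but your sketch (``add a bump to $g$ along a stable leaf; transparent for skew products; implicit-function argument otherwise'') does not establish anything: even in the skew-product case, showing that a localized fiber bump actually produces two attractor points on a common $W^{cs}$-leaf at different heights requires tracking how the graph $\phi_F$ responds along $W^s_A$-leaves, and this is not immediate. The paper instead gives a concrete perturbation: take a fixed point $\bar p\in\Lambda$ and a periodic point $q\in\Lambda$, let $p_*=W^s_r(\bar p)\cap W^u_r(q)$, and compose $F$ with the time-$t$ map of a vector field supported in a small ball around $F(p_*)$, tangent to $W^{cs}_r(\bar p)$ and pointing in the $E^c$ direction. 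This leaves $\bar p$, $q$, $W^u_r(q)$ and $W^{cs}_r(\bar p)$ unchanged, so $p_*$ is still in the perturbed attractor $\Lambda_G$, but $W^s_r(\bar p;G)$ now meets $W^c_{\mathrm{loc}}(p_*)$ at a point $p_*'\neq p_*$. If $G$ were still outside $\mc N'$, one would have $p_*'\in\Lambda_G$ as well, contradicting the fact that each $W^c$-leaf meets $\Lambda_G$ in a single point. This argument uses only the rigidity of finitely many invariant objects and avoids any implicit-function analysis of the global attractor.
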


\medskip
Our rationale for property (*) is as follows:
In the case where $F$ is a skew-product and $\Lambda=$ graph$(h)$ for 
some $h: \T^2 \to \T$, one way to see that $h$ is nowhere differentiable is
the presence of  ``arbitrarily large slopes" everywhere in the domain.
As $h$ is a nondifferentiable function,
``arbitrarily large slopes"  here refers to linear segments connecting points in 
the graph of $h$ that are arbitrarily close to being vertical.
This is essentially what (*) is intended to capture, except that for the perturbed map,
$E^c$ in general no longer coincides with the ``vertical" direction. Replacing the vertical fibers by 
the central foliation $W^c$, we observe that even though $\Lambda$ intersects every central leaf $W^c$ in one point so $\Lambda$
can in principle be viewed as the graph of a function, we have not pursued the
regularity of this graphing function because the central foliation is 
in general only H\"older continuous \cite{hirsch1977invariant}.


\subsection{Proof of Theorem \ref{Thm:SmallPertGeom}}

\ \medskip

The domination condition $\alpha_{max}^s < \lambda_g^{min}$ is assumed throughout.
We begin by recalling some standard facts on invariant manifolds from uniform
hyperbolic theory; see \cite{hirsch1977invariant}. The map $F$ has the following dynamical structures:
At each $p \in \Lambda$, there  is 

\smallskip
-- a stable manifold $W^s(p)$, 

-- a center stable manifold $W^{cs}(p)$, 

-- a center manifold $W^c(p)$ and 

-- an unstable manifold $W^u(p)$ 

\smallskip
\noindent tangent to $E^s(p), \ E^s(p) \oplus E^c(p), \ E^c(p)$ and $E^u(p)$ 
respectively. As noted in Sect. \ref{Sect:DynGeomDesc}, the dynamics on $W^c$ and $W^{cs}$ are
in fact strictly contractive, though not as strongly as they are on $W^s$.
 $\Lambda$ is a uniformly hyperbolic attractor and for any $p\in\Lambda$, $W^u(p)\subset \Lambda$.
The subspaces $E^s$ and $E^{cs}$ are in fact defined everywhere on  $\mathcal B(\Lambda)$, 
the basin of attraction of $\Lambda$, as they depend solely on a trajectory's forward iterates.
On  $\mathcal B(\Lambda)$, the subspaces $E^s$ and $E^{cs}$ vary continuously, and  
there are two continuous families of invariant manifolds
$W^s$ and $W^{cs}$ tangent everywhere to $E^s$ and $E^{cs}$,
with $W^s$-leaves foliating smoothly each $W^{cs}$-manifold; 
see \cite{hirsch1977invariant}. To avoid confusion, we will sometimes we will sometimes make explicit the dependence of the manifolds on the map $F$ and write  $W^*(p;F)$, $*\in\{s,c,cs,u\}$.  

As we will see,  the geometry of $\Lambda$ has much to do with whether 
or not $W^s(p) \subset \Lambda$ for $p \in \Lambda$,
and for that, we have the following dichotomy:

\begin{proposition}\label{Prop:strongstable}
Under the assumptions of Theorem \ref{Thm:SmallPertGeom},  if 
 $W^{s}(p)\subset \Lambda$ for some $p\in\Lambda$,
 then  $W^{s}(q)\subset \Lambda$ for all $q\in \Lambda$. 
\end{proposition}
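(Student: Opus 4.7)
The plan is to define $G := \{q \in \Lambda : W^s(q) \subset \Lambda\}$ and prove the dichotomy by showing that, whenever $G$ is nonempty, it contains a dense subset of $\Lambda$ while being itself closed in $\Lambda$. The three ingredients are $F$-invariance and closedness of $G$, and, as the crux, the identification of $W^s(p)$ for $p \in G$ with the intrinsic stable set of $p$ in $\Lambda$, which is dense by the conjugacy to the Anosov base.

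The easy pieces come first. Since $F(W^s(q)) = W^s(F(q))$ and $F(\Lambda) = \Lambda$, $G$ is $F$-invariant; also, every point of $W^s(p)$ shares the same strong stable leaf, so $p \in G$ yields $W^s(p) \subset G$. For closedness, suppose $q_n \in G$ converges to $q \in \Lambda$ and fix $y \in W^s(q)$. I would choose $N$ large so that $F^N y \in W^s_{loc}(F^N q)$; by continuity of the $W^s$-foliation on $\mc B(\Lambda)$, the leaves $W^s_{loc}(F^N q_n)$ converge to $W^s_{loc}(F^N q)$, so one obtains points $y_n \in W^s_{loc}(F^N q_n) \subset F^N W^s(q_n) \subset \Lambda$ with $y_n \to F^N y$. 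Closedness of $\Lambda$ and $F$-invariance then give $y \in \Lambda$.

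The substance of the proof is the identification $W^s(p) = W^s_\Lambda(p)$ for $p \in G$, where $W^s_\Lambda(p) := \Lambda \cap W^{cs}(p)$ is the stable set of $p$ under $F|_\Lambda$. I would do this locally first. By Theorem \ref{Thm:SmallPert}(ii) the attractor is a topological $2$-torus, and the local product structure attached to the uniformly hyperbolic splitting $E^{cs} \oplus E^u$ presents a neighborhood of $p$ in $\Lambda$ as homeomorphic to $W^u_{loc}(p) \times (\Lambda \cap W^{cs}_{loc}(p))$; dimension counting then forces $\Lambda \cap W^{cs}_{loc}(p)$ to be a topological $1$-arc through $p$. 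The hypothesis $p \in G$ places the smooth arc $W^s_{loc}(p)$ inside this topological $1$-arc, and invariance of domain forces the two $1$-arcs to coincide in a neighborhood of $p$. For the global identification, given $q \in W^s_\Lambda(p)$ one has $d(F^N q, F^N p) \to 0$, so for large $N$ the point $F^N q$ lies in the neighborhood of $F^N p \in G$ where the local identification applies; hence $F^N q \in W^s_{loc}(F^N p) \subset W^s(F^N p)$, and pulling back by $F^N$ gives $q \in W^s(p)$.

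Finally, the conjugacy $\varphi \colon \Lambda \to \T^2$ of Theorem \ref{Thm:SmallPert}(ii) is $F$-equivariant, so it carries $W^s_\Lambda(p)$ onto the Anosov stable manifold $W^s_A(\varphi(p))$, which is dense in $\T^2$. Thus $W^s(p) = W^s_\Lambda(p)$ is dense in $\Lambda$; combined with $W^s(p) \subset G$ and the closedness of $G$, this yields $G = \Lambda$. The step I expect to be most delicate is the local identification $W^s_{loc}(p) = \Lambda \cap W^{cs}_{loc}(p)$: the rest is continuity plus bookkeeping with $F$, but here one must use carefully that $\Lambda$ is a topological $2$-torus and that its local product structure produces a genuine $1$-dimensional transverse factor, leaving no room in $\Lambda \cap W^{cs}_{loc}(p)$ beyond the arc $W^s_{loc}(p)$.
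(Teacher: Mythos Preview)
Your proof is correct and follows essentially the same strategy as the paper's: identify $W^s(p)$ with the intrinsic stable set $\Lambda \cap W^{cs}(p)$, use the conjugacy to the Anosov base to conclude density, then close up via continuity of local $W^s$-disks. The one technical difference is in the identification step: where you use local product structure plus invariance of domain to match the two $1$-arcs, the paper uses the cleaner fact that each $W^c$-leaf meets $\Lambda$ in exactly one point, so the map $W^s(p) \to \Lambda \cap W^{cs}(p)$ obtained by sliding along $W^c$-leaves is automatically a bijection.
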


\begin{proof} Let $p$ be as above, and let $\Phi:\Lambda_0 \to \Lambda$ be the
topological conjugacy between $F_0|_{\Lambda_0}$ and $F|_\Lambda$, with 
$\Phi(p_0)=p$. Let $ W_0^s( p_0)$ and $ W_0^{cs}( p_0)$ denote $W^s(p_0;F_0)$ and $W^{cs}(p_0;F_0)$ repectively. Then $\Phi( W_0^{cs}( p_0))=W^{cs}(p)$ 
since $ W_0^{cs}( p_0)$ and $W^{cs}(p)$ are the points that asymptotically converge 
to $ p_0$ and $p$ respectively. 
 Since $ W_0^s( p_0)=  W_0^{cs}( p_0)\cap \Lambda_0$, it follows that
 $\Phi( W_0^s( p_0)) = W^{cs}(p) \cap \Lambda$, and this must be equal to $W^s(p)$
since each central leaf $W^c$ intersects $\Lambda$ at exactly one point and 
$W^s(p) \subset \Lambda$ by assumption. Thus for the point $p$, we have 
$\Phi( W_0^s( p_0)) = W^s(p)$. 

For $q \in \Lambda$ and $r>0$, let $W^s_r(q)$ denote the local stable manifold 
of radius $r$ centered at $q$. To prove the proposition it suffices to show that 
for some $r>0$,  $W^s_r(q) \subset \Lambda$ for all $q \in \Lambda$. 
Let $q \in \Lambda$ and $r>0$ be fixed. As $ W_0^s( p_0)$
is dense in $ \Lambda_0$,  $W^s(p)$ in dense in $\Lambda$. We pick
$q_n \in W^s(p)$ such that $q_n \to q$. By  uniform hyperbolicity of $\Lambda$ and  
continuity of  local stable disks, $W^s_r(q_n)$ tends to $W^s_r(q)$ 
as $n \to \infty$. Since  $W^s_r(q_n) \subset W^s(p) 
\subset \Lambda$ and $\Lambda$ is a closed set, it follows that $W^s_r(q) \subset
\Lambda$.
\end{proof}

\smallskip

\begin{proof}[Proof of Theorem \ref{Thm:SmallPertGeom}]
The set $\mathcal N$ consists of $F \in $Diff$^1(\T^2 \times \T)$ sufficiently close
to $F_0$ so that the domination conditions (discussed in Sect. \ref{Sect:DynGeomDesc}) are preserved
on an open set containing the attractor $\Lambda$.
Shrinking $\mc N$ if necessary, we will show that  $\mc N'$ is the set consisting of
those $F \in \mc N$ with the property that $W^{s}(p) \not \subset \Lambda$ for all $p$.

\medskip
First we assume $W^s(p) \not \subset \Lambda$ for all $p \in \Lambda$, and prove
property (*). The following hyperbolic estimate is standard. 

\begin{lemma} The following holds for all $F$ $C^1$-sufficiently close to $F_0$.
Let $\varepsilon_1>0$ be small enough that
$\alpha_{max}^s+\varepsilon_1 < \lambda_g^{min}-\varepsilon_1$, and let
$r=r(\varepsilon_1)>0$ be small enough. Then there exists $C \ge 1$
such that for $p \in \Lambda$ and $q \in W^{cs}_{r/2}(p)$, there is a unique point
$q^c \in W^c_r(p) \cap W^s_r(q)$. Moreover,
\begin{eqnarray*}
d(F^nq, F^nq^c) & \le & C (\alpha_{max}^s+\varepsilon_1)^n d(q, q^c), \\
\mbox{and} \qquad d(F^nq^c, F^np) &  \ge & C^{-1} (\lambda_g^{min}-\varepsilon_1)^n d(q^c,p)\ .
\qquad \mbox{for all } n \in \mathbb Z^+\ .
\end{eqnarray*}
\end{lemma}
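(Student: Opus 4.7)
The plan is to deduce the lemma from the Hirsch--Pugh--Shub invariant manifold theory together with the persistence of the dominated splitting under $C^1$-perturbation. First, I would fix $\varepsilon_1 > 0$ as in the statement and use continuity of the splitting $E^s \oplus E^c \oplus E^u$ and its rates under $C^1$-perturbation: for $F$ sufficiently $C^1$-close to $F_0$ on a suitable neighborhood of $\Lambda$, one has pointwise bounds
\[
\|DF_p|_{E^s(p)}\| \le \alpha_{max}^s + \varepsilon_1, \qquad \|DF_p v\| \ge (\lambda_g^{min} - \varepsilon_1)\|v\| \ \ \text{for } v \in E^c(p).
\]
These are exact at $F_0$ (where $DF_0|_{E^c}$ is the multiplication by $g'$ on the fiber, hence bounded below by $\lambda_g^{min}$) and pass to small perturbations by $C^1$-continuity.

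Next I would establish existence and uniqueness of $q^c$. The classical fact, recalled from \cite{hirsch1977invariant}, is that each local $W^{cs}_r(p)$ is smoothly foliated by the local strong stable leaves $W^s_r(\cdot)$. Taking $r = r(\varepsilon_1)$ sufficiently small and working in exponential coordinates at $p$, the local center--stable manifold can be presented as a graph over $W^c_r(p) \times W^s_r(p)$, so that any $q \in W^{cs}_{r/2}(p)$ has a unique decomposition $q = (q^c, \cdot)$ with $q^c \in W^c_r(p)$ and $q$ on a unique $W^s_r(q^c)$-leaf. Equivalently $W^c_r(p) \cap W^s_r(q)$ consists of a single point $q^c$, provided $r$ was chosen small enough (using uniform transversality of $E^c$ and $E^s$ on compact pieces and taking $q$ at distance at most $r/2$ so its local stable leaf still reaches $W^c_r(p)$).

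For the two estimates, since $q^c \in W^s_r(q)$, standard stable manifold hyperbolicity iterated along the forward orbit gives $d(F^n q, F^n q^c) \le C(\alpha_{max}^s + \varepsilon_1)^n d(q, q^c)$, where $C \ge 1$ absorbs the bounded distortion between ambient distance and arc length along $W^s$ on compact pieces. For the second estimate, since $q^c \in W^c_r(p)$, the points $F^n p$ and $F^n q^c$ lie on a common smooth $W^c$-arc (local center manifolds are forward invariant because $DF|_{E^c}$ is itself contractive, so iterates stay together in the neighborhood where the derivative bounds hold). Integrating the lower bound $\|DF|_{E^c}\| \ge \lambda_g^{min} - \varepsilon_1$ along this arc step by step and converting between arc length and ambient distance yields $d(F^n q^c, F^n p) \ge C^{-1}(\lambda_g^{min} - \varepsilon_1)^n d(q^c, p)$, enlarging $C$ if necessary.

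The main obstacle, and the reason for packaging this as a quantitative lemma rather than invoking HPS as a black box, is that one needs the constant $C$ and radius $r$ to be uniform in $p \in \Lambda$ and $q \in W^{cs}_{r/2}(p)$, and one needs the two estimates to hold simultaneously for all $n \ge 0$. Uniformity of $C$ follows from compactness of $\Lambda$ together with the $C^1$-continuous dependence of the invariant manifolds on the basepoint; the fact that forward iterates of $q^c$ stay in the good neighborhood of $F^n p$ follows from the center contraction $\|DF|_{E^c}\| < 1$, so the upper estimate $d(F^n q^c, F^np) \le C(\lambda_g^{min}+\varepsilon_1)^n d(q^c,p)$ keeps them close. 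The only subtlety is that the global center foliation is typically merely Hölder, but the proof only uses smoothness \emph{along individual} center leaves and not transverse regularity, so this is not an issue here.
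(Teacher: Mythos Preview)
Your proposal is correct and follows the standard invariant-manifold argument; the paper itself does not prove this lemma at all, stating only that ``the following hyperbolic estimate is standard'' and moving on. Your write-up is thus more detailed than what the paper provides, and the ingredients you invoke (persistence of the dominated splitting, the subfoliation of $W^{cs}$ by $W^s$-leaves from \cite{hirsch1977invariant}, contraction along $E^s$, and the lower bound along $E^c$ using that $DF|_{E^c}$ is itself contractive so iterates remain local) are exactly the ones implicitly behind the paper's appeal to standard theory.
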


The relevance of this estimate is as follows: Assuming $q^c \ne p$, we have, 
 as $n \to \infty$, $d(F^nq, F^nq^c) \ll d(F^nq^c, F^np)$, so
 $\measuredangle\left(\overrightarrow{(F^np)(F^nq)}, \overrightarrow{(F^np)(F^nq^c)}\right)\ll 1$.
At the same time, the length of $F^n(W^c_r(p))$ decreases exponentially with $n$, 
so $\overrightarrow{(F^np)(F^nq^c)}$ becomes increasingly aligned with
$E^c(F^np)$. This implies
$\measuredangle\left(\overrightarrow{(F^np)(F^nq)}, E^c(F^np)\right) \to 0$.

To prove (*), we pick $p_0 \in \Lambda$ with the property that its forward orbit is dense in $\Lambda$.  Recall that for any $r'>0$, $W_{r'}^{cs}(p_0)\cap \Lambda\neq \emptyset$
 and is homeomorphic to a segment via the topological conjugacy with the unperturbed attractor.
If $W_{r'}^{cs}(p_0)\cap \Lambda\subset W^s(p_0)$ then $W_{r}^s(p_0)\subset \Lambda$  for some $r>0$, and it would follow that $W^s(p_0) \subset \Lambda$, which we have ruled out.
So for any $r>0$,  there exists $q_0 \in \Lambda \cap W^{cs}_{r/2}(p_0)$ 
such that $q_0 \not \in W^s(p_0)$. This implies $q_0^c \ne p_0$, and the segments 
$\overrightarrow{(F^np_0)(F^nq_0)}$ will be increasingly aligned with $E^c$ as $n \to \infty$.
The orbit of $p_0$ carries these segments to every $B_\varepsilon(p_*)$, proving (*).

\begin{figure}[h!]
\center
\includegraphics[scale=0.5, page=7]{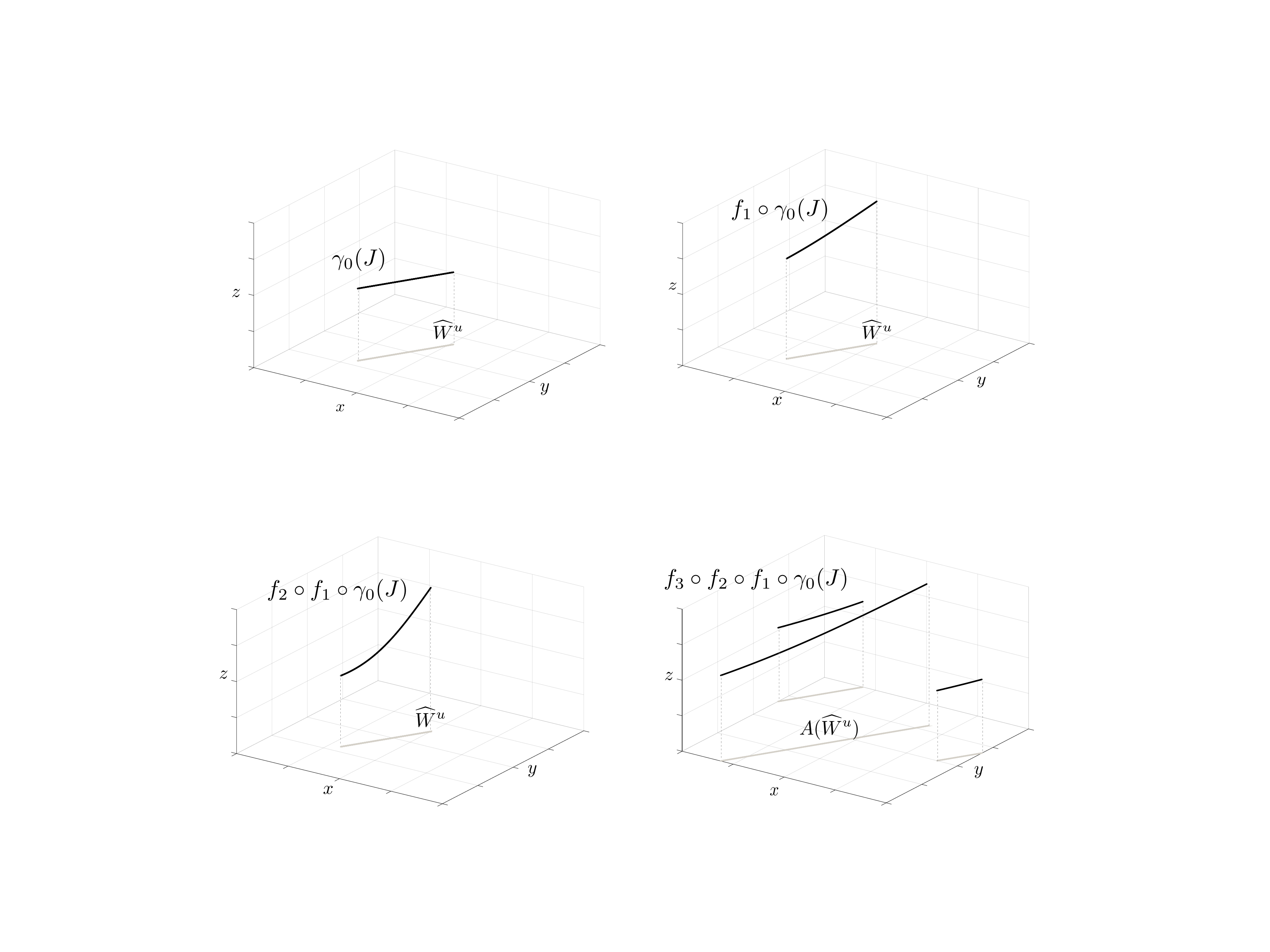}
\caption{Illustration of a coupled system having an attractor with
wild geometry. The plotted points represent the intersection of the attractor $\Lambda$ with a $W^{cs}-$manifold through one of the points in the attractor. The horizontal and vertical axes indicate approximate directions of $E^s$ and $E^c$.}
\end{figure}

\medskip
It remains to prove that $\mc N'$ is open and dense in $\mc N$.

To prove openness, assume that for a given $F$, $W^s(p') \not \subset  \Lambda$ for all $p' \in \Lambda$. Let $\bar p \in \Lambda$ be a fixed point. Then  there is $q\in W^s_r(\bar p)\backslash \Lambda $ with  $d(q,\Lambda)>0$. The fixed point $\bar p$, $W^s_r(\bar p)$, and $\Lambda$ all vary continuously as we perturb the map $F$. Therefore one can always find a point $q$ with the same property as above for any small enough perturbation. 
 By Proposition \ref{Prop:strongstable}, this implies that if $G$ is a small enough perturbation of $F$
and $\Lambda_G$ is the attractor of $G$, then
$W^s(p') \not \subset  \Lambda_G$ for all $p' \in \Lambda_G$.

 To prove denseness, suppose that $F \in \mc N \setminus \mc N'$; we will produce 
 an arbitrarily small perturbation of $F$ that takes it into $\mc N'$  by
precomposing with the time-$t$-map of a flow to be constructed.
 Let $\bar p \in \Lambda$ be a fixed point, 
 and pick a periodic point $q \in \Lambda$ so that 
 $W_r^{s}(\bar p)$ meets $W_r^u(q)$ at a unique point $p_*$  Now $W^u_r(q)\subset \Lambda$
  since unstable manifolds of uniformly hyperbolic attractors are always contained in the attractor.
 This and the assumption that $W^s_r(p)\subset \Lambda$ imply  $p_*\in\Lambda$. 
 Let $B$ be a small ball centered at $F(p_*)$. We assume it is small enough that
it does not meet either $F^{-i}W_r^u(q)$ for $0\leq i<\tau$ where $\tau$
is the period of $q$, or the part of
$W^s_r(\bar p)$ containing $F^i(p_*)$ for $i >1$.  Choose a vector field so that is identically
zero outside of $B$; in $B$, it points upwards in the vertical component and is
tangent to $W^{cs}_r(\bar p)$. Let $\varphi_t$ be the generated flow , 
and let $G= \varphi_t \circ F$ for an arbitrarily small $t>0$. This ensures that $\bar p, q$ 
are untouched, so they are in $\Lambda_G$, the attractor of $G$. We also have $W^u_r(q;G)=W^u_r(q)$ and 
$W^{cs}_r(\bar p;G)=W^{cs}_r(\bar p)$. So $W^{cs}_r(\bar p; G) \cap W_r^u(q; G)=\{p_*\}$
as before, but $W^s_r(\bar p;G)$ now meets $W^c_{\rm loc}(p_*; G)$ at a point $p_*'$ 
a little below $p_*$.
For $G$ to remain in $\mc N \setminus \mc N'$, $\Lambda_G$ must contain $W^s(\bar p;G)$,
hence $p_*' \in \Lambda_G$. But then $W^c_{\rm loc}(p_*;G)$ meets $\Lambda_G$ 
in at least two points, which cannot happen, so $G \in \mc N'$ after all.
\end{proof}


\section{ Regular Monotonic Interactions: Geometrical Properties}\label{Sect:RegMonotINtGeom}

In  Sections \ref{Sect:RegMonotINtGeom} and \ref{Sec:RegMonotStatist} we consider small perturbations of skew-product
diffeomorphisms $F$ having the form
\begin{equation} \label{Eq:SkewF}
F(x,y,z)=\left(A(x,y), g(z+r(x))\right)
\end{equation}
where $r:\T\rightarrow \T$ is a orientation preserving diffeomorphism,  so $r'>0$. This equation
represents a coupling in which
the Anosov map in the base drives the fiber maps by rotating the circle at $(x,y)$ by
an amount equal to $r(x)$.
In Sect. \ref{Sec:RegMonotStatist}  we show that couplings of this kind produce open
sets of maps with SRB measures on attractors that are not uniformly hyperbolic.
 The present section lays the geometric groundwork.
 The results of these two sections depend crucially
on the {\it monotonicity} in the amount rotated, 
i.e., on the monotonic dependence of $r$ on $x$.

Notationally though not necessarily in substance, it is cleaner to first present 
 some of
this material for the skew-product map $F$ above, and that is what we will do.

\subsection{Assumptions and  Invariant Cones}\label{Sec:AssumDom}

\ \medskip

We begin with some standing
assumptions, introducing some  notation along the way. 
Let $\partial_x, \partial_y, \partial_z$ 
denote the unit vectors pointing in the $x, y$ and $z$-directions respectively.

\subsubsection{ Standing Assumptions for sections  \ref{Sect:RegMonotINtGeom} and \ref{Sec:RegMonotStatist}}  \label{Sec:StandingAssum}
 Let $F$ be the skew-product map defined in Equation (\ref{Eq:SkewF}), and
let $\lambda_A$, $\lambda_g^{min}$ and $\lambda_g^{max}$ be as defined in Section \ref{Sec:Const}.
We assume:

\medskip \noindent
(A1) $\lambda_A<\lambda_g^{min}<1<\lambda_g^{max}<(\lambda_A)^{-1}$ 

\bigskip \noindent
(A2) The map $r$ is $C^2$ with $c \le r' \le C$ for some $c, C>0$.

\bigskip \noindent
(A3) The following two conditions on the geometry of $A$ are assumed:

\medskip \noindent
(i) To ensure that the function $r$ varies monotonically along stable and
(more importantly) unstable manifolds of $A$, we assume at every $\tw p \in \T^2$ that 
$E^u_A(\tw p)$ and $E^s_A(\tw p)$ are not aligned with 
the $y$-axis, and let $\partial_A^u(\tw p)$ be the unit vector in $E^u_A(\tw p)$ with a positive
$x$-component, $\partial_A^s(\tw p)$ the unit vector in $E^s_A(\tw p)$ with a
negative $x$-component. It follows by compactness that there exists $\beta>0$ such that 
\[
\partial_A^u(\tw p) \cdot \partial_x \ge \beta, \quad
\partial_A^s(\tw p) \cdot \partial_x \le - \beta \qquad \mbox{for all } \tw p\in\T^2\ .
\]

\noindent
(ii) Let $\alpha^u, \alpha^s$ be functions on $\T^2$ such that at each $p$,
\[
DA_{\tw p}(\partial_A^u(\tw p))= \alpha^u(\tw p) \partial_A^u(A\tw p), \quad
DA_{\tw p}(\partial_A^s(\tw p))= \alpha^s(\tw p) \partial_A^s(A\tw p) \ .
\]
Assume without loss of generality that $\alpha^u, \alpha^s > 0$, and let $\alpha^{u/s}_{\max}$
and $\alpha^{u/s}_{\min}$ be the maximum and minimum of these functions.

\subsubsection{ Partially hyperbolic splitting}

The fact that $F$ is a skew product implies immediately that $\langle\partial_z\rangle$, the subspace generated by the unit vector in
the $z$-direction, is $DF$-invariant. It implies also that the subspaces 
$E^u_A \oplus \langle\partial_z\rangle$
and $E^s_A \oplus \langle\partial_z\rangle$ are $DF$-invariant.
We write $E^c := \langle\partial_z\rangle$, $E^{cu}: = E^u_A \oplus \langle\partial_z\rangle$ and $E^{cs} := E^s_A \oplus \langle\partial_z\rangle$.

Our first step is to locate at each point a $DF$-invariant subspace $E^u \subset E^{cu}$. 
At each $p \in \T^2 \times \T$, we consider 
\[
DF_p|_{E^{cu}(p)}: E^{cu}(p)
\to E^{cu}(Fp)\ .
\]
For $v \in E^{cu}(p)$, let $v=(v^u, v^z)$ be the components of $v$ 
with respect to the basis $\{\partial_u(p), \partial_z\}$, where $\partial_u(x,y,z)
= (\partial_A^u (x,y), 0)$. For $0 \le s < t \le \infty$, we define the one-sided cone
\[
\mc C^u_{s,t}(p) := \{v \in E^{cu}(p) :
v^u, v^z \ge 0 \mbox{ and } s v^u \le v^z \le t v^u\}.
\]

\begin{lemma}\label{Lem:InvCones} Assume conditions (A1)-(A3), and let 
\[
a = \frac{2\lambda_g^{max} C}{\alpha^u_{min} - \lambda_g^{max}}, \qquad
b = \frac{\lambda_g^{min} c \beta}{\alpha^u_{max}}\ .
\]
Then there exists $c_0>0$ such that the following hold at every $p \in \T^2 \times \T$:
\begin{itemize}
\item[(i)] $DF_p(\mc C^u_{0,a}(p)) \subset \mc C^u_{b,a-\sigma}(Fp)$
for some small $\sigma>0$;
\item[(ii)] for all $v \in \mc C^u_{0,a}(p)$, $\|DF^n_p v\| \ge c_0 (\lambda_A)^{-n} \|v\|$ for all
$n \ge 0$. 
\end{itemize}
\end{lemma}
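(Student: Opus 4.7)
The plan is to reduce everything to a $2\times 2$ calculation in the $DF$-invariant plane $E^{cu}(p)$ using the basis $\{\partial_u(p),\partial_z\}$. Because $F$ is a skew product and the $z$-direction is invariant while $E^u_A$ is invariant under $DA$, the subspace $E^{cu}(p) = E^u_A(\tw p) \oplus \langle\partial_z\rangle$ is carried by $DF_p$ into $E^{cu}(Fp)$, and in the chosen bases the differential acts by the formula
\begin{equation*}
DF_p(v^u,v^z) \;=\; \bigl(\alpha^u(\tw p)\, v^u,\; \gamma(p)\rho(p)\, v^u + \gamma(p)\, v^z\bigr),
\end{equation*}
where $\gamma(p):=g'(z+r(x))$ and $\rho(p):=r'(x)\,\partial^u_A(\tw p)\!\cdot\!\partial_x$. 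Under (A2)--(A3) one has $\lambda_g^{min}\le\gamma\le\lambda_g^{max}$ and $c\beta\le\rho\le C$; under (A1) combined with the Anosov bound $\|DA\,w\|\ge\lambda_A^{-1}\|w\|$ on $E^u_A$, one has $\alpha^u_{min}\ge \lambda_A^{-1} > \lambda_g^{max}$.

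For (i), I track the slope $s:=v^z/v^u$. The formula above shows that slopes transform as $\tilde s = (\gamma/\alpha^u)(\rho+s)$. If $v \in \mc C^u_{0,a}(p)$, then $\tilde v^u,\tilde v^z \ge 0$ is immediate, and
\begin{equation*}
\tilde s \;\le\; \frac{\lambda_g^{max}(C + a)}{\alpha^u_{min}}, \qquad \tilde s \;\ge\; \frac{\lambda_g^{min}\,c\beta}{\alpha^u_{max}} = b.
\end{equation*}
With the choice $a = 2\lambda_g^{max}C/(\alpha^u_{min}-\lambda_g^{max})$, elementary algebra gives $\lambda_g^{max}(C+a)/\alpha^u_{min} = a\bigl[\tfrac12 + \lambda_g^{max}/(2\alpha^u_{min})\bigr] < a$, so the upper estimate is $a - \sigma$ with $\sigma := a\bigl[\tfrac12 - \lambda_g^{max}/(2\alpha^u_{min})\bigr] > 0$ independent of $p$. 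This proves the one-step invariance $DF_p(\mc C^u_{0,a}(p))\subset \mc C^u_{b,a-\sigma}(Fp)$, which by iteration also yields $DF_p^n(\mc C^u_{0,a}(p))\subset \mc C^u_{b,a-\sigma}(F^np)$ for every $n\ge 1$.

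For (ii), I exploit that the $u$-component evolves multiplicatively: if $v \in \mc C^u_{0,a}(p)$ and $(v_n^u,v_n^z) := DF_p^n v$, then
\begin{equation*}
v_n^u \;=\; \Bigl(\prod_{k=0}^{n-1}\alpha^u(A^k\tw p)\Bigr) v^u \;\ge\; \lambda_A^{-n}\, v^u,
\end{equation*}
and since $\|DF_p^n v\|\ge v_n^u$, it remains only to compare $v^u$ with $\|v\|$. Confinement to $\mc C^u_{0,a}$ gives $v^z \le a v^u$, hence $\|v\|\le \sqrt{1+a^2}\,v^u$, so (ii) follows with $c_0 := 1/\sqrt{1+a^2}$.

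There is no real obstacle — the argument is a bookkeeping calculation — but the one place that requires attention is ensuring the slack $\sigma > 0$ in the upper cone bound. That is precisely where the choice of $a$ as \emph{twice} the minimum threshold is used, together with the hyperbolicity-vs.-fiber-expansion gap $\alpha^u_{min}>\lambda_g^{max}$ coming from (A1). Everything else follows from straightforward bounds on the three quantities $\gamma$, $\rho$, $\alpha^u$ controlled by (A1)--(A3).
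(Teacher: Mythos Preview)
Your proof is correct and follows essentially the same approach as the paper: both reduce to the $2\times2$ action of $DF$ on $E^{cu}$ in the basis $\{\partial_u,\partial_z\}$, track the slope $v^z/v^u$ under one iterate to get the cone invariance, and use the multiplicative growth of the $v^u$-component together with the cone bound $v^z\le a v^u$ for the expansion estimate. Your version is in fact slightly more explicit, giving concrete formulas for $\sigma$ and $c_0$ where the paper leaves them implicit.
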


\medskip
Notice that $a>0$, since $\alpha^u_{min} \ge (\lambda_A)^{-1} > \lambda_g^{max}$
by Assumption (A1), and that $0<b<a$. The fact that the invariant cone is in the positive quadrant of $E^{cu}$ is a consequence of the monotonicity as can be seen from the proof below.
\begin{proof} (i) Let $h(x,y,z):=g(z+r(x))$.
With respect to the bases $\{\partial_u, \partial_z\}$, we have, at every point in
$\T^2 \times \T$,
\[
DF|_{E^{cu}}=\left(\begin{array}{cc}
\alpha^u & 0\\
\partial_uh& \partial_zh
\end{array}\right) \ .
\]
Since all the entries are nonnegative, we see immediately that the first quadrant is
preserved. For $v$ with $v^u \ne 0$ and $v^z = av^u$, let
$DF(v)=(\bar v^u, \bar v^z)$. Then
\[
\frac{\bar v^z}{\bar v^u} = \frac{\partial_uh v^u + \partial_z hv^z}{\alpha^u v^u}
\le \frac{(\lambda_g^{max} C) v^u + \lambda_g^{max} (a v^u)}{\alpha^u_{min} v^u}\ .
\]
Substituting in the value of $a$ above, and writing $\lambda =\lambda_g^{max}, \ 
 \alpha=\alpha^u_{min}$, we obtain
\[
\frac{\bar v^z}{\bar v^u} \le \frac{\lambda C + 
\lambda \left(\frac{2\lambda C}{\alpha-\lambda}\right)}{\alpha} 
= \frac{\lambda C\alpha - \lambda^2 C + 2 \lambda^2 C}{\alpha(\alpha-\lambda)}
= \frac{\lambda C}{\alpha-\lambda} \left(1 + \frac{\lambda}{\alpha} \right) < a \ .
\]
Likewise, letting $v^u >0$ and $v^z=0$, we obtain
\[
\frac{\bar v^z}{\bar v^u} \ge \frac{(\lambda_g^{min} c)\beta v^u}{\alpha^u_{max} v^u} = b\ .
\]
This completes the proof of (i).

\smallskip \noindent
(ii) Using the fact that $F$ is a skew-product, we have, for any $v$,
\[
\|DF^n v\| \ge |(DF^n v) \cdot \partial_u| = \|A^nv^u\| \ge (\lambda_A)^{-n} |v \cdot \partial_u|\ .
\]
The assertion follows since $\|v\cdot \partial_u\| \ge \mbox{const} \|v\|$ for $v \in \mc C^u_{0,a}$.
\end{proof} 

Lemma \ref{Lem:InvCones} implies the existence of a {\it bona fide} $DF$-invariant 
unstable subspace $E^u$ defined
everywhere on $\T^2 \times \T$ with uniform expansion.
A similar analysis can be carried out for 
$DF^{-1}_p|_{E^{cs}(p)}: E^{cs}(p) \to E^{cs}(F^{-1}p)$.
We summarize the results as follows:

\smallskip
\begin{proposition} Under Assumptions (A1)-(A3), there is a $DF$-invariant continuous
splitting of the tangent bundle of $\T^2 \times \T$ into $E^u \oplus E^c \oplus E^s$  
with the properties below.
\begin{itemize}
\item[(i)] $E^c = \langle \partial_z \rangle$; for $v \in E^c$,
$\lambda_g^{min} \|v\| \le \|DFv\| \le \lambda_g^{max} \|v\|$ \ .
\item[(ii)] for all $v \in E^u$, we have $\|DF^n v\| \ge c_0 (\lambda_A)^{-n} \|v\|$ for all $n \ge 0$;

\noindent
for all $v \in E^s$, we have $\|DF^n v\| \le c_0^{-1} (\lambda_A)^n \|v\|$ for all $n \ge 0$.
\end{itemize}
\end{proposition}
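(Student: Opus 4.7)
The plan is to extract $E^u$ by intersecting forward iterates of the invariant cone provided by Lemma \ref{Lem:InvCones}, extract $E^s$ by running a mirror-image argument for $DF^{-1}$ on $E^{cs}$, and obtain $E^c$ for free from the skew-product form. The $E^c$ clause is the easy part: since $F$ is a skew product over $A$ in the $z$-coordinate, $\langle\partial_z\rangle$ is $DF$-invariant, and on it $DF$ acts by multiplication by $\partial_z h=g'(z+r(x))\in[\lambda_g^{min},\lambda_g^{max}]$, which gives (i) immediately.

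For $E^u$, I would define at each $p$
\[
E^u(p):=\bigcap_{n\ge 0}DF^n\bigl(\mc C^u_{0,a}(F^{-n}p)\bigr).
\]
Lemma \ref{Lem:InvCones}(i) makes this a nested intersection of closed cones in $E^{cu}(p)$, so it suffices to show that the ``slopes'' in the second coordinate converge to a single value. Parametrising vectors in $\mc C^u_{0,a}$ by $\rho=v^z/v^u$, the matrix expression of $DF$ in the $\{\partial_u,\partial_z\}$-basis given in Lemma \ref{Lem:InvCones} induces the affine map
\[
\rho\;\longmapsto\;\bar\rho\;=\;\frac{\partial_uh+\partial_zh\,\rho}{\alpha^u},
\]
whose contraction rate in $\rho$ is $\partial_zh/\alpha^u\le\lambda_g^{max}/\alpha^u_{\min}<1$ by Assumption (A1). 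Hence iterating produces a uniform exponential contraction in $\rho$, the nested cones shrink to a single line $E^u(p)$, and the continuous dependence of this line on $p$ is a consequence of the continuous dependence of the fixed-point slope on the base point. The expansion estimate in (ii) for $E^u$ is then inherited from Lemma \ref{Lem:InvCones}(ii).

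The construction of $E^s$ is symmetric. Using the splitting $E^{cs}=E^s_A\oplus\langle\partial_z\rangle$ with basis $\{\partial_s,\partial_z\}$ (where $\partial_s(x,y,z)=(\partial_A^s(x,y),0)$), I would write out $DF^{-1}|_{E^{cs}}$, observe that its ``unstable'' rate $1/\alpha^s\ge 1/\lambda_A$ dominates its ``central'' rate $1/g'\le 1/\lambda_g^{min}$ by (A1) (here using the second half $\lambda_g^{max}<\lambda_A^{-1}$ of (A1) to get strict domination), and use Assumption (A3) to secure the monotonicity sign $\partial_s h\ge 0$ needed to repeat the cone argument. This gives an analogous $DF^{-1}$-invariant cone $\mc C^s_{0,a'}\subset E^{cs}$, and the same projective-contraction argument yields a continuous one-dimensional $E^s$ with the contraction estimate in (ii).

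Finally one checks that $E^u\oplus E^c\oplus E^s$ spans the whole tangent bundle. Nonzero vectors in $E^u$ have $v^u>0$, and $E^c=\langle\partial_z\rangle$ has $v^u=0$, so $E^u\cap E^c=\{0\}$ and $E^{cu}=E^u\oplus E^c$; symmetrically $E^{cs}=E^s\oplus E^c$. Since $E^{cu}+E^{cs}$ already spans (their sum of dimensions is $4$, intersection is $\langle\partial_z\rangle$), a dimension count gives $E^u\oplus E^c\oplus E^s=T(\T^2\times\T)$. The only place I expect any real friction is verifying the strict projective contraction cleanly — but Lemma \ref{Lem:InvCones} is already stated with a strict improvement $a-\sigma<a$ and the affine-slope calculation above turns that into uniform contraction, so even this step is short.
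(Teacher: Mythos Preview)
Your approach is correct and is exactly what the paper intends: it states the proposition as a summary of Lemma \ref{Lem:InvCones} together with ``a similar analysis for $DF^{-1}_p|_{E^{cs}}$,'' and you have spelled out that similar analysis via the standard projective-contraction cone argument.

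Two small corrections that do not affect the structure of your argument. First, by (A3)(i) one has $\partial_A^s\cdot\partial_x\le-\beta<0$, so $\partial_s h=g'(z+r(x))\,r'(x)\,(\partial_A^s\cdot\partial_x)<0$, not $\ge 0$. This is exactly what you want: when you invert $DF|_{E^{cs}}$ the $(2,1)$ entry is $-\partial_s h/(\alpha^s\partial_z h)>0$, so the positive quadrant is preserved by $DF^{-1}$ and the one-sided cone argument goes through. Second, the projective contraction rate for the $E^s$ construction is $\alpha^s/\partial_z h\le\alpha^s_{\max}/\lambda_g^{min}\le\lambda_A/\lambda_g^{min}<1$, which uses the \emph{first} inequality $\lambda_A<\lambda_g^{min}$ of (A1), not the second half $\lambda_g^{max}<\lambda_A^{-1}$ (the latter was what gave the $E^u$ contraction).
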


\smallskip
By Assumption (A1), $E^c$ is a genuine central subspace, satisfying the following
uniform domination condition:
$$ 
\max \|DF|_{E^s}\| < \min \|DF|_{E^c}\|\ ; \quad \max \|DF|_{E^c}\| < \min \|DF|_{E^u}\|\ .
$$
By standard theory, $W^s, W^{cs}, W^c, W^{cu}$ and $W^u$-manifolds tangent to
$E^s, E^{cs}, E^c, E^{cu}$ and $E^u$ respectively are 
defined everywhere and they form invariant foliations on $\T^2 \times \T$.

\subsection{ A few basic properties}

\ \medskip 

This subsection discusses  some properties of the skew-product map $F$ 
satisfying Assumptions (A1)-(A3) and its perturbations.
Let $\pi_{xy}$ and $\pi_z$ be projections of $\T^2 \times \T$ onto the horizontal and vertical fibers 
respectively.

\subsubsection{Geometry of $W^u$-leaves}\label{Sec:GeomWuleaves}

The following  is a direct consequence of the monotonicity of the interaction,
more specifically of the monotonicity of the function $r$ along the unstable manifolds of $A$. 

By construction,  tangent vectors of $W^u$-curves of $F$ lie in the cones $\mc C^u_{b,a}$ (as defined in Lemma \ref{Lem:InvCones}). More precisely, let $J$ be an interval. 
If $\gamma:J\rightarrow \T^2 \times \T$ is a differentiable embedding  
such that $\gamma(J)$ is a piece of $W^u$-curve, then $\gamma'(t)\in \mc C_{a,b}^u(\gamma(t))$ for all $t\in J$. Geometrically, this means that $\pi_{xy} \circ \gamma(J)$ is a subset of an unstable leaf for $A$, while $\pi_z \circ \gamma_n$ winds around the fiber $\T$
monotonically (see Figure~\ref{Fig:AdmissibleLeaves}). Moreover, $(\pi_z \circ \gamma)'/ (\pi_{xy} \circ \gamma)' \in [b,a]$.

More generally, if $J$ is an interval and  $\gamma_0: J \to \T^2 \times \T$  is a $C^1$ curve such that $\gamma_0'(t)\in\mc C_{0,a}^u(\gamma_0(t))$, then defining $\gamma_n:= F^n \circ \gamma_0$,  
$\gamma_n'(t) \in \mc C^u_{b,a}(\gamma_n(t))$ for all $t \in J$, and $(\pi_z \circ \gamma_n)'/ (\pi_{xy} \circ \gamma_n)' \in [b,a]$.

\subsubsection{Mixed behavior of $DF|_{E^c}$}\label{Sec:MixedBehav}

\medskip
A convenient way to understand $DF|_{E^c}$ is to view $F$ as the composite map
\[F = f_3 \circ f_2 \circ f_1\]
where
\begin{eqnarray*} 
f_1(x,y,z) & :=  &(x,y,z+r(x)); \\
 f_2(x,y,z) &:=  &(x,y, g(z)), \\
 f_3(x,y,z) &: = & (A(x,y), z)\ .
\end{eqnarray*}
See Figure~\ref{Fig:AdmissibleLeaves} for an illustration. From this decomposition, one sees immediately that for $p=(x,y,z)$, the restriction of the differential  of $F$
at $p$ to the invariant direction $E^c$ is
\begin{equation} \label{DFc}
DF_{p}|_{E^c} = g'\circ \pi_z\circ f_1(p)\ .
\end{equation}
We introduce here also the idea of a fundamental domain.
Let $V$ be a $W^u$-curve segment, i.e. a bounded piece of a $W^u$-curve. $V$
 is called a {\it fundamental domain} if $\pi_z(V)=\T$
and no proper subset of $V$ has this property.
Recall that $\alpha^u$ is the coefficient of expansion in the direction of $E^u$. 

The following lemma shows that even  as $F$ has a splitting into $E^u \oplus
E^c \oplus E^s$ with a uniform domination condition (see Sect. 4.1.2),
the central direction has inherently mixed behavior  provided the expansion
is strong enough.

\begin{proposition}\label{Prop:DensityLyap} Assume (A1)-(A3). If additionally $\alpha^u_{min}$
is large enough, then for every $W^u$-curve segment $W$,
\begin{itemize}
\item[(i)]  the set $\{p\in W : \liminf_{n \to \infty} \frac{1}{n}\log \left\|DF^n_p|_{E^c}\right\|>0\}$ is dense in $W$;
\item[(ii)] the set $\{p\in W : \limsup_{n \to \infty} \frac{1}{n}\log \left\|DF^n_p|_{E^c}\right\|<0\}$ is dense in $W$.
\end{itemize}
\end{proposition}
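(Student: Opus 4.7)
The plan is to leverage the explicit formula for $DF$ on the central direction. Since $E^c=\langle\partial_z\rangle$ is $DF$-invariant, formula \eqref{DFc} yields
$\log\|DF^n_p|_{E^c}\|=\sum_{k=0}^{n-1}\log g'\bigl(\pi_z\circ f_1(F^kp)\bigr)$,
so the Lyapunov exponent of $p$ along $E^c$ equals the Birkhoff average of $\log g'$ along the orbit of $\pi_z\circ f_1\circ F^k p$. Since $\log g'$ attains its strictly positive maximum at $z_r$ and its strictly negative minimum at $z_a$, for~(i) it suffices to construct, densely in $W$, points whose forward orbit is trapped under $\pi_z\circ f_1$ in an arbitrarily small neighborhood of $z_r$ from some time on, and for~(ii) the analogous statement near $z_a$.

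For each small $\mu>0$ set $U_\star=(z_\star-\mu,z_\star+\mu)\subset\T$ and $\widetilde U_\star=(\pi_z\circ f_1)^{-1}(U_\star)$ for $\star\in\{a,r\}$. The geometric description in Section~\ref{Sec:GeomWuleaves} tells us that along any $W^u$-curve of $F$ the function $\pi_z\circ f_1$ is strictly monotone with arclength derivative bounded above and below by positive constants depending only on $a,b,c,C,\beta$. Hence there exists $L^\ast>0$ such that every $W^u$-curve of length $\geq L^\ast$ has its $\pi_z\circ f_1$-image equal to $\T$, and its intersection with $\widetilde U_\star$ contains a sub-curve of arclength at least $c_1\mu$ for an explicit constant $c_1>0$. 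Moreover, $F$ uniformly expands the arclength of $W^u$-curves by a factor at least $\alpha^u_{\min}$.

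Fix any non-empty relatively open sub-arc $\tilde W\subset W$. Iterate $\tilde W$ forward until $|F^{n_0}(\tilde W)|\geq L^\ast$ and choose a closed sub-curve $V_0\subset F^{n_0}(\tilde W)\cap\widetilde U_\star$ of length $\geq c_1\mu$; set $W_0'=F^{-n_0}(V_0)\cap\tilde W$. Now assume $\alpha^u_{\min}$ is so large that $\alpha^u_{\min}c_1\mu\geq L^\ast$. Then $F(V_k)$ is itself a $W^u$-curve of length $\geq L^\ast$ at every stage, so inductively one can select closed sub-curves $V_{k+1}\subset F(V_k)\cap\widetilde U_\star$ of length $\geq c_1\mu$ and set $W_{k+1}'=F^{-(n_0+k+1)}(V_{k+1})\cap W_k'$. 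The $W_k'$ form a nested sequence of closed sub-arcs of $\tilde W$ whose lengths tend to $0$, so their intersection is a single point $p\in\tilde W$ satisfying $F^{n_0+k}(p)\in V_k\subset\widetilde U_\star$ for every $k\geq 0$.

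For case~(i), with $\star=r$, every iterate $j\geq n_0$ contributes $\log g'\bigl(\pi_z\circ f_1(F^jp)\bigr)\geq\log\lambda_g^{\max}-O(\mu)$, while the initial $n_0$ terms contribute at worst $n_0\log\lambda_g^{\min}$; as $n\to\infty$ this yields $\liminf_n\tfrac1n\log\|DF^n_p|_{E^c}\|\geq\log\lambda_g^{\max}-O(\mu)>0$ provided $\mu$ is small enough. Case~(ii) is symmetric with $\star=a$ and gives $\limsup_n\tfrac1n\log\|DF^n_p|_{E^c}\|\leq\log\lambda_g^{\min}+O(\mu)<0$. Density in $W$ follows because $\tilde W$ was an arbitrary open sub-arc. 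The main technical balancing act is choosing $\mu$ small enough that the $O(\mu)$ errors are negligible compared to $\log\lambda_g^{\max}$ and $|\log\lambda_g^{\min}|$, yet large enough that $\alpha^u_{\min}c_1\mu\geq L^\ast$, so that a single iterate of $F$ restores a full fundamental domain; both requirements are simultaneously satisfiable precisely when $\alpha^u_{\min}$ is sufficiently large, which is the hypothesis of the proposition.
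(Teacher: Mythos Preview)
Your argument is correct and is essentially the same nested-subarc construction as in the paper: trap the orbit under $\pi_z\circ f_1$ in a region where $\log g'$ has a definite sign, using that one iterate of $F$ stretches each selected subarc back to full fundamental-domain length. The only cosmetic difference is that the paper works with the fixed interval $I^+=\{z:g'(z)>1+\delta\}$ (resp.\ $I^-$) and with $\pi_{xy}$-length rather than arclength, which makes the ``balancing act'' you describe at the end unnecessary---one simply fixes $\delta$ (depending on $g$ alone) and then imposes the largeness condition on $\alpha^u_{\min}$.
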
 

\smallskip
\begin{proof} To prove (i), we will show that given $W$, there exists $p \in W, 
n_0 \in \Z^+$, and $\delta>0$ such that $DF_{F^np}|_{E^c} > 1+\delta$ for all $n \ge n_0$.
This implies denseness of the set in question 
because the argument can be applied to any subsegment $W' \subset W$
of any length.

Fix $\delta\in (0,\lambda^g_{max}-1)$.  Let $I^+:=\{z \in \T: g'(z) > 1+ \delta\}$. 
Since $F^n(W)$ grows in length,  and it winds around the vertical fiber with
a positive minimum speed (Sect. \ref{Sec:GeomWuleaves}), 
there exists $n_0$ such that $F^{n_0}(W)$ contains a fundamental
domain $V$. 
Let $V_{n_0} \subset V$ be such that $\pi_z \circ f_1(V_{n_0}) = I^+$ where $f_1$
is as defined above. Then for all $q \in V_{n_0}$, 
$DF_q|_{E^c} > 1+\delta$. Now if $F(V_{n_0})$ contains a fundamental domain, then 
the argument can be repeated to produce $V_{n_0+1} \subset F(V_{n_0})$ with the property 
that $\pi_z \circ f_1(V_{n_0+1}) = I^+$. The process can be continued indefinitely
provided that at each stage, $F(V_n)$ contains a fundamental domain. Assuming that,
the point $p \in \cap_{n \ge n_0} F^{-n}V_n$ will have the desired property.

To ensure that the procedure in the last paragraph can be continued, recall that if $\gamma$ is a parametrization of a $W^u$-curve, then $(\pi_z \circ \gamma)'/ (\pi_{xy} \circ \gamma)' \in [b,a]$. This implies that 
(a) there exists $\ell_1$ such that any $W^u$-curve $\bar W$
for which $\pi_{xy} \bar W$ has length $\ge \ell_1$ must contain a fundamental domain, and
(b) there exists $\ell_2$ such that if a $W^u$-segment $\bar W$ is such that
$\pi(f_1(\bar W))=I^+$, then $\pi_{xy} \bar W$ must have length $\ge \ell_2$.
It suffices to require $\ell_2 \cdot \alpha^u_{\min} \ge \ell_1$.

To prove (ii), one substitutes $I^+$ with $I^-:=\{z \in \T: g'(z) < 1- \delta\}$, and the 
same proof carries over \emph{mutatis mutandis}. 
\end{proof}

An immediate corollary of Proposition \ref{Prop:DensityLyap} is that $F$  satisfying the hypotheses of
this proposition cannot admit an Axiom A attractor, in fact, the mixed behaviour in the invariant subbundle
$E^c$ makes it impossible
for it to have a uniformly hyperbolic set that contains entire unstable manifolds.

\begin{figure}[h!]
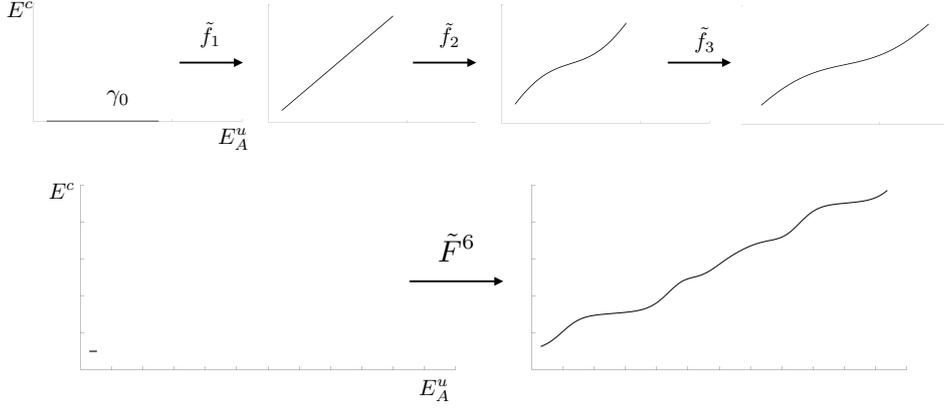
\label{Fig:MonotCurve}
\center
\includegraphics[page=2,scale=0.35]{Figure}

\medskip
\includegraphics[page=3,scale=0.35]{Figure}
\caption{Top: Evolution of a curve in $W^{cu}$. 
Here we show the action of the map on a curve $\gamma_0$, $\gamma_0(J) \subset \{z=\mbox{constant}\}$,  such that $\pi_{xy}\gamma_0$ is contained in a $W^u_A-$curve. The pictures on the top show  the effect of $\tilde f_1$, $\tilde f_2$, and $\tilde f_3$ on $\gamma_0$, where $\tilde f_1$, $\tilde f_2$, and $\tilde f_3$ are the lifts of $f_1$, $f_2$, and $f_3$ respectively. Here we assume that $r(x)=x$. Bottom: action of $\tilde F^6$ on a piece of curve $\gamma_0$, where $\tilde F$ is the lift of the map $F$. These plots highlight the effect of the monotonicity of the coupling.}
\label{Fig:AdmissibleLeaves}
\end{figure}

\subsubsection{ Persistence of splitting  and foliations.} \label{Rem:PersUnderSkew}
Above we showed that every skew-product map $F$ satisfying
Conditions (A1)-(A3) has a continuous splitting $E^u \oplus E^c \oplus E^s$ defined everywhere
on $\T^2 \times \T$, and that with respect to this splitting, $DF$ satisfies a uniform domination condition. By standard invariant cones arguments, these properties are passed 
(with slightly relaxed bounds) to all maps $C^1$-near $F$. Furthermore, the foliation  of $\T^2 \times \T$ by circle fibers is smooth and  by Condition (A1), also
 normally hyperbolic,  i.e. each leaf $(x,y)\times \T$ is tangent to $E^c$ and for any $p\in (x,y)\times \T$ 
 \[
 \|DF_p|_{E^c}\|<\|DF_p(\partial_z)\|<\min_{\substack{v\in E^u,}{\|v\|=1}}\|DF_pv\|.
 \]
    By  Theorem 7.1 in \cite{hirsch1977invariant}, such a foliation, whose leaves are closed curves tangent to $E^c$,  persists under
small $C^1$ perturbations, although in general the perturbed foliation is only continuous. That is to say, if $F$ is
a skew-product map, $\mathcal F$ is the foliation of 
$\T^2 \times \T$ into circle fibers, and
$G$ is a $C^1$-small perturbation of $F$, then there is a unique $G-$invariant continuous foliation $\mathcal G$ and a homeomorphism $H$ of
$ \T^2 \times \T$ to itself that is a leaf-conjugacy, meaning $H$
carries the leaves of $\mathcal F$ to the leaves of $ {\mathcal G}$, and
passes to a homeomorphism $H/\sim$ from
$(\T^2 \times \T)/\mathcal F$ to $(\T^2 \times \T)/{\mathcal G}$ that
conjugates the two quotient dynamical systems.


\subsection{Contracting centers}\label{Sec:CondDensity}

\ \medskip

 Returning to the skew-product map $F$, this subsection studies conditions under which $DF|_{E^c}$ is,  {\it on average},
contracting. Such a notion requires that we specify a reference measure on unstable curves.
With an eye toward SRB measures,  the following is a natural choice; see e.g.
\cite{hasselblatt2002handbook}.
For any segment  of $W^u$-curve $W$,   we let $m_W$ denote the arclength measure on $W$, and
let $\rho_W:W \to \mathbb R$ be the function with the property that for all $p, p' \in W$,
\begin{equation} \label{density}
\frac{\rho_W(p)}{\rho_W(p')} \ = \ 
\lim_{n \to \infty} \frac{DF^n_{F^{-n}p'}|_{E^u}}{DF^n_{F^{-n}p}|_{E^u}} \ = \ 
\lim_{n \to \infty} \prod_{i=1}^n \frac{DF_{F^{-i}p'}|_{E^u}}{DF_{F^{-i}p}|_{E^u}}\ ,
\end{equation}
normalized so $\int_W \rho_W \ dm_W = 1$. Because distances on $W^u$ contract exponentially
fast in backward time, the limit above exists and convergence is exponential.
Let $\mu_W$ be the probability measure defined by $d\mu_W = \rho_W dm_W$.
Two useful properties of $\mu_W$ that can be deduced from the definition of $\rho_W$ are  (i)  $F_*\mu_W=\mu_{FW}$ where $F_*\mu_W$
is the pushforward of the measure $\mu_W$ by $F$,
and (ii) given $\ell>0$,
there is $K=K(\ell)>0$ such that for any $W$ of length $\le \ell$ and any $p,p'\in W$
\begin{equation}\label{Eq:bndDist}
\frac{\rho_W(p)}{\rho_W(p')}\le K\ .
\end{equation}
The distortion constant $K$ depends on $\alpha^u_{min}$ and on 
the $C^2$ norm of $F$.

\begin{definition} We say $F$ satisfies a {\it contracting center condition} (with respect to the
reference measures $\mu_V$)
if there exists $ c_1>0$ such that for any fundamental domain $V$
\begin{equation} \label{CC}
\int_{F^{-1}V} \log DF_p|_{E^c} \ d\mu_{F^{-1}V}(p) \ \le \ - c_1\ .
\end{equation}
\end{definition}

The above condition, which we will henceforth abbreviate as the ``CC-condition", is reminiscent 
of similar conditions in the literature (see e.g. \cite{dolgopyat2000dynamics}). It is natural to
integrate over $F^{-1}V$ (rather than $V$) because the action of $DF|_{E^c}$ on $F^{-1}V$
translates into the action of $g'$ on the full circle; see (\ref{DFc}).

\medskip
Below we give a condition in terms of the various derivative bounds of $F$ (see Sect.~\ref{Sec:AssumDom})
that implies the CC-condition.
Let $I^-:=\{z\in\T:\,g'(z)< 1\}$ and $I^+:=\{z\in\T:\,g'(z)\ge 1\}$. If $V_A$ is a segment of unstable leaf for the Anosov $A$, one can define $\rho_{V_A}$ as in \eqref{density}  above by substituting $F$ with $A$.  Analogous properties hold 
for $\rho_{V_A}$; we denote the distortion constant for $\rho_{V_A}$ by $K_A$.

\begin{proposition}
Asssume (A1)-(A3). If
\begin{equation}\label{Eq:NegExpCond}
 \left[\frac{CK_A^2}{c\beta}\left( 1+\frac{2\lambda_g^{max} }{\alpha^u_{min} - \lambda_g^{max}}\right)\right]\, \int_{I^+}\log g'(s)ds+ \int_{I^-}\log g'(s)ds<0\ ,
\end{equation}
 then $F$  satisfies the CC-condition.
 \end{proposition}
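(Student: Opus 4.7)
The plan is to reduce $I := \int_{F^{-1}V}\log DF_p|_{E^c}\,d\mu_{F^{-1}V}(p)$ to an integral of $\log g'$ over $\T$ with an explicit density, then split over $I^+$ and $I^-$ and apply \eqref{Eq:NegExpCond}. Set $W := F^{-1}V$ and $W_A := \pi_{xy}W$. By \eqref{DFc}, $\log DF_p|_{E^c} = \log g'(z + r(x))$, so $I = \int_W \log g'(z+r(x))\,d\mu_W$. The key preliminary step is to establish the identity $(\pi_{xy})_*\mu_W = \mu_{W_A}$, where $\mu_{W_A}$ is the analogous SRB-like reference measure on the $A$-unstable curve $W_A$ (the product formula \eqref{density} with $F$ replaced by $A$). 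This follows because, by the block-triangular form of $DF|_{E^{cu}}$ in the basis $\{\partial_u,\partial_z\}$ from the proof of Lemma~\ref{Lem:InvCones}, the unstable expansion $DF|_{E^u}$ factors as $\alpha^u$ times a telescoping angle-correction depending on the unstable slope $m\in[b,a]$; since backward iterates along an unstable leaf contract exponentially, the telescope converges and produces the stated push-forward identity.

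With that identity, I would parametrize $W_A$ by arclength $s\in[0,L_A]$ with $(\xi'(s),\eta'(s)) = \partial_A^u$, write $W$ as the graph $\{(\xi(s),\eta(s),\zeta(s))\}$ with $\zeta'(s) = m(s) \in [b,a]$, and set $\tau(s) := \zeta(s) + r(\xi(s))$. Because $V$ is a fundamental domain and $g$ is a circle diffeomorphism, the map $G(x,y,z):=z+r(x)$ takes $W$ bijectively (with degree one) onto $\T$, so $\tau:[0,L_A]\to\T$ is a $C^1$ bijection. From (A3)(i) and $r'\in[c,C]$,
\[
\tau'(s) = m(s) + r'(\xi(s))(\partial_A^u\cdot\partial_x) \in [b+c\beta,\ a+C].
\]
Changing variables $s\mapsto\tau$ then gives $I = \int_\T \log g'(\tau)\,\Theta(\tau)\,d\tau$ with $\Theta(\tau) := \rho_{W_A}(\tw p(s(\tau)))/\tau'(s(\tau))$, which is a probability density on $\T$.

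Splitting $\T = I^+\cup I^-$ and using that $\log g'\ge 0$ on $I^+$ and $<0$ on $I^-$,
\[
I \le (\sup\Theta)\int_{I^+}\log g'\,d\tau + (\inf\Theta)\int_{I^-}\log g'\,d\tau.
\]
To match the bracketed constant in \eqref{Eq:NegExpCond}, I would bound $\sup\Theta/\inf\Theta$ by $K_A^2(a+C)/(c\beta)$. The $\tau'$-bounds contribute $(a+C)/(b+c\beta)\le (a+C)/(c\beta)$, while \eqref{Eq:bndDist} applied to $\rho_{W_A}$ contributes $K_A^2$: writing $W_A = A^{-1}V_A$ and using $\mu_{V_A} = A_*\mu_{W_A}$, the ratio $\rho_{W_A}(\tw p)/\rho_{W_A}(\tw p')$ factors into the $|DA|_{E^u_A}$-distortion along $W_A$ (bounded by $K_A$) times the $\rho_{V_A}$-distortion along $V_A$ (also bounded by $K_A$). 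Since $a/C = 2\lambda_g^{max}/(\alpha^u_{min} - \lambda_g^{max})$,
\[
\frac{\sup\Theta}{\inf\Theta} \le \frac{CK_A^2}{c\beta}\Bigl(1+\frac{2\lambda_g^{max}}{\alpha^u_{min}-\lambda_g^{max}}\Bigr),
\]
which is exactly the bracket in \eqref{Eq:NegExpCond}; the hypothesis then yields $I\le -c_1<0$ with $c_1$ uniform in $V$.

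The hardest step is the preliminary identity $(\pi_{xy})_*\mu_W = \mu_{W_A}$: it requires a careful telescoping in \eqref{density} that relies on the $[b,a]$-bounded slope of $E^u$ and on the skew-product form in an essential way. Everything else is a chain of explicit cone and distortion estimates whose constants correspond term-by-term to the factor appearing in \eqref{Eq:NegExpCond}.
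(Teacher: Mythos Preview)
Your approach is essentially identical to the paper's: both push $\mu_{F^{-1}V}$ down to $\mu_{A^{-1}V_A}$ via $(\pi_{xy})_*$, change variables to a density on $\T$ with derivative bounded in $[b+c\beta,\,a+C]$, split over $I^\pm$, and arrive at the same ratio $K_A^2(a+C)/(c\beta)$. The one notable difference is emphasis: the paper simply asserts $(\pi_{xy})_*\mu_V=\mu_{V_A}$ without comment, whereas you correctly identify that in the skew-product setting this requires the telescoping argument you sketch (the slope factors $\sqrt{1+m^2}$ cancel against the $\pi_{xy}$-Jacobian); your justification of the $K_A^2$ factor via $W_A=A^{-1}V_A$ is slightly roundabout---applying \eqref{Eq:bndDist} directly to $A^{-1}V_A$ already gives a single $K_A$---but it matches the constant in the stated proposition.
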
 

\begin{proof}
 Let $V$ be a segment of $W^u$-curve of $F$ that is a fundamental domain, and let 
$V_A:=\pi_{xy}(V)$. Notice that $(\pi_{xy})_*\mu_{V}=\mu_{V_A}$, and this implies $\mu_{F^{-1}V}=(\pi_{xy}|_{F^{-1}V})^{-1}_*\mu_{A^{-1}V_A}$. Therefore,
\[
\int_{F^{-1}V} \log DF_{p}|_{E^c} d\mu_{F^{-1}V}(p)
\ = \ \int_{\T} \log g'(s)\, d((\pi_z \circ f_1 \circ (\pi_{xy}|_{F^{-1}V})^{-1})_*\mu_{A^{-1}V_A})(s)
\]
where $f_1$ is as defined in Sect. 4.2. 

 By the change of variables formula 
\[
\frac{d(\pi_z \circ f_1 \circ (\pi_{xy}|_{F^{-1}V})^{-1})_*\mu_{A^{-1}V_A}}{ds}(s)=\rho_{A^{-1}V_A}(p(s))\,  D(\pi_{z}\circ f_1\circ \pi_{xy}^{-1}|_{A^{-1}V_A})^{-1}_s,
\] 
where $p(s):=[\pi_z \circ f_1 \circ (\pi_{xy}|_{F^{-1}V})^{-1}]^{-1}(s)$. Recall that the tangent direction to $F^{-1}V$ is $E^u\in \mc C^u_{b,a}$ which, together with assumption (A3)  implies that the tangent direction to $f_1(F^{-1}(V))$ is in $\mc C^u_{b+c\beta,a+C}$, and therefore $b+c\beta\le D(\pi_{z}\circ f_1\circ \pi_{xy}^{-1}|_{A^{-1}V_A})\le a+C $. Recalling that $K_A^{-1}\le \rho_{A^{-1}V_A}\le K_A$ we obtain
\[
[(a+C)K_A]^{-1} \le\frac{d(\pi_z \circ f_1 \circ (\pi_{xy}|_{F^{-1}V})^{-1})_*\mu_{A^{-1}V_A}}{ds}(s)\le \frac{K_A}{c\beta}.
\]
Therefore
\begin{align*}
\int_{F^{-1}V} &\log DF_{p}|_{E^c} d\mu_{F^{-1}V}(p)\\
&\le \frac{K_A}{c\beta}  \int_{I^+}\log g'(s)ds+\left[\left( \frac{2\lambda_g^{max} }{\alpha^u_{min} - \lambda_g^{max}}+1\right)CK_A\right]^{-1} \int_{I^-}\log g'(s)ds
\end{align*}
and the above is strictly less than zero if condition \eqref{Eq:NegExpCond} is satisfied.
\end{proof}

 The following is a concrete example of a map $F$ satisfying the CC-condition. Pick any $g$ satisfying  the conditions in Sect. 2.1.
This condition implies that $g' $ is not constant. By Jensen's inequality,
\[
\int_{\T} \log g'(s) ds < \log \int_{\T} g'(s) ds = 0.
\]
 So, there exists $\delta>0$ such that 
\[
(1+\delta)  \int_{I^+}\log g'(s)ds+ \int_{I^-}\log g'(s)ds<0.
\]
Now pick $A$ to be any linear Anosov diffeomorphism such that 
$\beta=\partial_A^u \cdot \partial_x$ satisfies $\beta^{-1} < (1+\frac12 \delta)$.
Pick $r$ also linear, i.e. $r'=1$. Under these assumptions $K=c=C=1$ and condition \eqref{Eq:NegExpCond} reads
\[
\frac{1}{\beta} \left( 1+\frac{2\lambda_g^{max} }{\alpha^u - \lambda_g^{max}}\right) \int_{I^+}\log g'(s)ds+\int_{I^-}\log g'(s)ds<0
\]
where $\alpha^u$ is the uniform expansion rate of $A$. 
 The quantity in brackets on the left can be made arbitrarily close to $1$
when $\alpha^u$ is sufficiently large relative to $\lambda_g^{max}$. 
 It remains to ensure that $\beta$ is close to $1$. For that one can choose, e.g.,
\[
A=\left(\begin{array}{cc}
N& N-1\\
1 & 1
\end{array}\right)
\]
for which the expanding direction is parallel to  
\[
 \binom {{N-1+\sqrt{(N-1)(N+3)}}}{2},
\]
and is therefore  as aligned with the $x$-axis as we wish for $N$ sufficiently large.  Notice that by picking $A$ with $\beta$  close to one and $\alpha^u$ large, we have also satisfied assumptions (A1)-(A3).

\section{ Regular Monotonic Interactions: Statistical Properties}\label{Sec:RegMonotStatist}
The main result of this
section, namely the existence of SRB measures for an open set of nonuniformly
hyperbolic systems  obtained by regular, monotonic couplings of 
$A$ and $g$,
is stated and proved in 
Sect. \ref{Sec:OpenSetSRB}. Uniqueness of SRB measure 
is proved in Sect. \ref{Sec:Uniqueness} under additional restrictions. 
Sect. \ref{Sec:SRBandphys} contains a brief review of SRB measures and 
Sect. \ref{Sec:LiterRev} discusses connections to the existing literature.

\subsection{SRB and physical measures}\label{Sec:SRBandphys}

\ \medskip

We provide here a brief review of SRB measures and physical 
measures; see  \cite{eckmann1985ergodic} for more information.

Let $\Phi: M \circlearrowleft$ be a $C^2$ diffeomorphism of a compact Riemannian
manifold $M$. A $\Phi$-invariant Borel probability measure $\mu$ is called an 
{\it SRB measure} if (i) $\Phi$ has a positive Lyapunov exponent $\mu$-a.e., and
(ii) the conditional measures of $\mu$ on the unstable manifolds of $\Phi$ are
absolutely continuous with respect to the Riemannian volume on these manifolds.

The following is one of the reasons why SRB measures are important. 
Equating observable events with sets of positive Lebesgue (or 
Riemannian) measure, we call
an invariant Borel probability measure $\mu$ a {\it physical measure} if there is a positive
Lebesgue measure set $\mathcal B(\mu) \subset M$ with the property that for any 
continuous function $h:M \to \mathbb R$, 
\begin{equation} \label{physical}
\lim_{n\rightarrow \infty }\frac1n \sum_{i=0}^{n-1} h(\Phi^i x) \ = \ \int_M h d\mu
\qquad \mbox{for every } x \in \mathcal B(\mu)\ .
\end{equation}
This is not the Birkhoff Ergodic Theorem, as $\mu$ need not be absolutely continuous 
with respect to Lebesgue measure. By the absolute continuity of stable foliations,
 ergodic SRB measures with no zero exponents are physical measures
\cite{pugh1989ergodic}, and any SRB measure with no zero
Lyapunov exponents can be decomposed into at most 
a countable number of ergodic components each one of which is an SRB measure.

The significance of SRB measures was recognized by Sinai, Ruelle and Bowen, 
who constructed these measures for Axiom A attractors
in the 1970s \cite{sinai1972gibbs, ruelle1976measure}; see also \cite{bowen1975equilibrium}. 
The  concept was extended to more general dynamical systems by Ledrappier,
Young and others in the 1980s \cite{ledrappier1985metric,ledrappier1984proprietes,young1985bowen,ledrappier1981proof}. 

Not all attractors admit SRB measures, however. Outside of the 
Axiom A category there are not many concrete examples; see e.g. \cite{young1998statistical,young2002srb}.
The attractors discussed in this paper are not Axiom A, but
they are not far from Axiom A. They belong in a class of dynamical systems called 
(uniformly) partially hyperbolic systems. See Sect. \ref{Sec:LiterRev} below for  
a more detailed discussion.



\subsection{Open sets of nonuniformly hyperbolic maps with SRB measures}\label{Sec:OpenSetSRB}

\ \medskip

Let Diff$^2(\T^2 \times \T)$ denote the set of $C^2$ diffeomorphisms of $\T^2 \times \T$ 
onto itself equipped with the $C^2$ topology.  The following is the
main result of this section.

\begin{theorem}\label{Thm:OpenDenseSet} 
Let $ F \in \mbox{Diff}^2(\T^2 \times \T)$ be the skew-product map given by Eq. \eqref{Eq:SkewF}. 
We assume it satisfies (A1)-(A3)  in Sect. \ref{Sec:AssumDom} and the CC-condition in Sect. \ref{Sec:CondDensity}. Then there is a neighborhood $\mc N$ of $ F$ in Diff$^2(\T^2 \times \T)$ 
such that every $G\in \mc N$ admits an SRB measure $\mu_G$ with one positive and two
negative Lyapunov exponents.
\end{theorem}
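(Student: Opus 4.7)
\smallskip
\noindent\textbf{Proof proposal.}
The plan is to construct $\mu_G$ by the classical push-forward method: start with a reference measure on a piece of unstable curve for $G$, average its iterates under $G$, and pass to a weak-$\star$ limit. Negativity of the stable and central exponents comes from domination and from the CC-condition respectively.

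First, I would use the persistence machinery of Sect.~\ref{Rem:PersUnderSkew} to shrink $\mc N$ so that every $G \in \mc N$ inherits a continuous dominated splitting $E^u_G \oplus E^c_G \oplus E^s_G$ with uniform domination constants, invariant cones as in Lemma~\ref{Lem:InvCones}, one-dimensional $W^u_G$- and $W^s_G$-foliations, and a uniform distortion bound of the form \eqref{Eq:bndDist}. Since the CC-condition involves integrals of continuous data that vary continuously in the $C^2$ topology (the splitting directions, $DG|_{E^c_G}$, and the densities $\rho_V$ which depend only on $DG|_{E^u_G}$ along backward orbits), the CC-condition continues to hold for every $G \in \mc N$ with a uniform constant which we may take to be $c_1/2$.

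Next, fix $G \in \mc N$, choose a $W^u_G$-curve $W_0$ of some fixed small length, and equip it with the reference measure $\mu_{W_0}$ determined by the density \eqref{density}. Form the Ces\`aro averages
\[
\mu_n := \frac{1}{n}\sum_{i=0}^{n-1} G^i_\ast \mu_{W_0}\ ,
\]
and extract a weak-$\star$ subsequential limit $\mu_G$, which is automatically $G$-invariant. The SRB property of $\mu_G$ follows by a standard Pesin--Sinai-type argument: each $G^i_\ast \mu_{W_0}$ equals $\mu_{G^i W_0}$ by the pushforward property noted after \eqref{density}, and the densities $\rho_{G^i W_0}$ are uniformly bounded above and below on unstable pieces of bounded length by the distortion bound; any weak limit of such u-Gibbs-type averages has absolutely continuous conditional measures on local $W^u_G$-manifolds. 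Combined with the uniform expansion in $E^u_G$ (Lemma~\ref{Lem:InvCones}(ii)), this gives the SRB property and the positive Lyapunov exponent $\chi^u_G \ge \log \lambda_A^{-1} > 0$. The stable exponent $\chi^s_G \le \log \lambda_A < 0$ is immediate from the contraction estimate on $E^s_G$.

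The core step is to show that the central exponent
\[
\chi^c_G \ = \ \int \log \|DG_p|_{E^c_G}\| \, d\mu_G(p) \ = \ \lim_k \frac{1}{n_k}\sum_{i=0}^{n_k-1} \int_{W_i}\log \|DG|_{E^c_G}\|\, d\mu_{W_i}
\]
is strictly negative, where $W_i := G^i W_0$. By the monotonicity geometry of Sect.~\ref{Sec:GeomWuleaves}, for $i$ large $W_i$ is long and $\pi_z(W_i)$ wraps around $\T$ many times, so $W_i$ may be partitioned into $N_i \to \infty$ fundamental domains $V_1,\dots,V_{N_i}$ plus a bounded remainder $R_i$. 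Because the ratio $\rho_{W_i}(p)/\rho_{W_i}(p')$ depends only on the backward $G$-orbits of $p,p'$, the normalized restriction $\mu_{W_i}|_{V_j}/\mu_{W_i}(V_j)$ coincides exactly with $\mu_{V_j}$. Noting that the family of fundamental domains is stable under the pre-image (since $\pi_z \circ G$ is onto $\T$ on each $V$), the CC-condition for $G$ translates into $\int_{V_j}\log \|DG|_{E^c_G}\|\, d\mu_{V_j} \le -c_1/2$. Summing over $j$ gives
\[
\int_{W_i}\log \|DG|_{E^c_G}\|\, d\mu_{W_i} \ \le \ -\tfrac{c_1}{2}\bigl(1-\mu_{W_i}(R_i)\bigr) + O\bigl(\mu_{W_i}(R_i)\bigr)\ ,
\]
and since the length of $W_i$ blows up while $R_i$ stays bounded one checks $\mu_{W_i}(R_i) \to 0$. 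In the Ces\`aro limit one concludes $\chi^c_G \le -c_1/2 + o(1) < 0$, completing the three-exponent picture.

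\smallskip
The main obstacle is the negativity of $\chi^c_G$: one must (i) verify that the identification $\mu_{W_i}|_{V_j}/\mu_{W_i}(V_j) = \mu_{V_j}$ is truly exact (it is, thanks to the intrinsic backward-orbit definition of the densities), (ii) control the boundary term $\mu_{W_i}(R_i) \to 0$ using the exponential growth of $|W_i|$, and (iii) transfer the CC-condition from the skew-product $F$ to every $G \in \mc N$ via $C^2$-continuity of the integrands, which is why one demands a $C^2$ (rather than $C^1$) neighborhood.
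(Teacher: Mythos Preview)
Your argument is essentially correct and follows the same overall architecture as the paper's proof: push forward a reference measure on a $W^u$-curve, average, pass to a weak-$\star$ limit to obtain a u-Gibbs measure, and then use the CC-condition to pin down the central exponent. The treatment of Steps~1 and~3 (persistence of the splitting/cones and $C^2$-openness of the CC-condition) matches the paper's.

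Where you differ is in Step~2, the negativity of the central exponent. The paper works \emph{after} passing to the limit: it takes the invariant measure $\mu_\infty$, chooses a measurable partition $\eta$ of $\T^2\times\T$ whose elements are fundamental domains, and disintegrates each ergodic component $\mu$ of $\mu_\infty$ along $G^{-1}\eta$. Since the conditional measures of $\mu$ on elements of $G^{-1}\eta$ are exactly the reference measures $\mu_V$, the CC-condition gives $\int\log DG|_{E^c}\,d\mu<0$ directly for every ergodic component, hence $\mu_\infty$-a.e.\ negativity of the central exponent with no remainder to track. You instead apply the CC-condition \emph{before} the limit, chopping each long curve $W_i$ into fundamental-domain pieces and controlling a shrinking boundary term. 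Both routes work; the paper's avoids the boundary bookkeeping and delivers the a.e.\ statement immediately, while yours makes the mechanism (long curves sample the fibre uniformly) more visibly geometric.

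Two small repairs are needed in your version. First, the CC-condition bounds $\int_{G^{-1}V}\log DG|_{E^c}\,d\mu_{G^{-1}V}$ for fundamental domains $V$, not $\int_{V}\log DG|_{E^c}\,d\mu_{V}$; the clean fix is to partition $W_{i+1}=G(W_i)$ into fundamental domains $V_j'$ and pull back, so that $W_i$ is partitioned into the sets $G^{-1}V_j'$ to which the CC-condition applies verbatim (your identity $\mu_{W_i}|_{G^{-1}V_j'}/\mu_{W_i}(G^{-1}V_j')=\mu_{G^{-1}V_j'}$ still holds for the same backward-orbit reason). Second, your displayed equality $\chi^c_G=\int\log\|DG|_{E^c}\|\,d\mu_G$ presumes ergodicity; what you actually prove is that the \emph{expected} central exponent is $\le -c_1/2$, so you should either pass to an ergodic component (which is again u-Gibbs and hence SRB) or, as the paper does, observe that the disintegration argument applies to every ergodic component separately.
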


\smallskip
 It follows that all ergodic components of $\mu_G$ are physical measures.

As to whether the SRB measures in Theorem \ref{Thm:OpenDenseSet} are supported on Axiom A attractors,
a straightforward extension of Proposition \ref{Prop:DensityLyap} to maps near $F$ that
are not necessarily skew products together with the examples in Sect. \ref{Sec:CondDensity}  gives the following result.

\begin{corollary}
 Among the maps obtained by coupling together $A$ and $g$, there exist 
$C^2$-open sets $\mathcal N$ with the properties that 

(i) no $G \in \mathcal N$ has an Axiom A attractor, and

(ii) all $G \in \mathcal N$ admit SRB measures.
\end{corollary}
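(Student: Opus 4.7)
The plan is to combine Theorem \ref{Thm:OpenDenseSet} with a perturbative version of Proposition \ref{Prop:DensityLyap}. First I would fix a concrete skew-product $F$ coming from the explicit construction at the end of Sect. \ref{Sec:CondDensity}: pick $g$ as in Sect. \ref{Sec:Const} (so $g'$ is non-constant and $\int_\T \log g'(s)\,ds<0$), a linear Anosov matrix $A$ of the form displayed there with $\beta$ close to $1$ and $\alpha^u$ sufficiently large relative to $\lambda_g^{max}$, and the linear coupling $r(x)=x$. By construction $F$ satisfies (A1)--(A3) and the CC-condition, so Theorem \ref{Thm:OpenDenseSet} supplies a $C^2$-neighborhood $\mc N$ of $F$ in Diff$^2(\T^2\times\T)$ on which every $G$ admits an SRB measure with one positive and two negative Lyapunov exponents. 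This already gives assertion (ii).

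For (i), after shrinking $\mc N$ if necessary, I would extend Proposition \ref{Prop:DensityLyap} to every $G\in\mc N$. The dominated splitting $E^u\oplus E^c\oplus E^s$ and the foliations $W^u, W^c, W^{cs}$ persist under $C^1$-small perturbations (Sect. \ref{Rem:PersUnderSkew}); in particular the center foliation $\mc G$ of $G$ is continuous, normally hyperbolic, and $C^0$-close to the vertical fibration, while $W^u$-leaves of $G$ lie in a cone close to $\mc C^u_{b,a}$ and are uniformly expanded by $DG$. Because $DG_p|_{E^c(p)}$ depends continuously on $(G,p)$ and reduces to $g'(z+r(x))$ for $G=F$, fixing a small $\delta>0$ the open sets $U^{\pm}_\delta := \{p:\pm(DG_p|_{E^c(p)} - 1)>\delta\}$ have positive Lebesgue measure for every $G\in\mc N$. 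I would then redefine a \emph{fundamental domain} for $G$ as a $W^u$-segment meeting every leaf of $\mc G$ and minimal with that property. Uniform transversal speed of $W^u$ across the leaves of $\mc G$, together with uniform expansion, ensures that every $W^u$-segment contains a fundamental domain after finitely many iterates. The nested-interval construction in the proof of Proposition \ref{Prop:DensityLyap} then produces, on any $W^u$-segment $W$, dense subsets of points with $\liminf_n \tfrac1n\log\|DG^n_p|_{E^c}\|>0$ and with $\limsup_n \tfrac1n\log\|DG^n_p|_{E^c}\|<0$. An Axiom A attractor of $G$ would contain the full local unstable manifold of each of its points, and the continuous invariant splitting on such an attractor is uniformly hyperbolic; the one-dimensional invariant sub-bundle $E^c$ would therefore be forced to be either uniformly contracting or uniformly expanding along those $W^u$-leaves, which the two density statements rule out. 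Hence no $G\in\mc N$ admits an Axiom A attractor.

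The principal difficulty is the faithful transfer of the ``monotonic winding'' and fundamental-domain machinery from the explicit skew-product setting to a general $G\in\mc N$. For $F$ everything is controlled by the factorization $F=f_3\circ f_2\circ f_1$ and by $E^c=\langle\partial_z\rangle$; for a perturbation $G$ the center bundle is only continuous and the vertical projection $\pi_z$ must be replaced by the quotient map onto $(\T^2\times\T)/\mc G$. The leaf-conjugacy of Sect. \ref{Rem:PersUnderSkew} is exactly what handles this: it identifies the quotient dynamics with the Anosov map $A$, so the length-growth and fundamental-domain counting inherited from $A$ can be pulled back through the $C^0$-closeness of $\mc G$ to the vertical foliation and through the persistence of the cone $\mc C^u_{b,a}$. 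Once this quantitative transfer is in place, the remainder of the argument proceeds \emph{mutatis mutandis} from the skew-product case.
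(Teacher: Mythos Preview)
Your proposal is correct and follows the same approach as the paper. The paper does not give an explicit proof of this corollary; it simply states before the corollary that ``a straightforward extension of Proposition \ref{Prop:DensityLyap} to maps near $F$ that are not necessarily skew products together with the examples in Sect.~\ref{Sec:CondDensity} gives the following result,'' and you have faithfully filled in exactly those details: the concrete example of Sect.~\ref{Sec:CondDensity} for a base point $F$, Theorem~\ref{Thm:OpenDenseSet} for (ii), and the perturbative extension of Proposition~\ref{Prop:DensityLyap} via the persistence results of Sect.~\ref{Rem:PersUnderSkew} for (i).
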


\medskip \noindent
{\bf Remark on Terminology:} 
 We have used $W^u, W^{cu}, W^{cs}$ and $W^s$ to denote 
the invariant manifolds
tangent to the subbundles $E^u, E^c \oplus E^u, E^c \oplus E^s$ and $E^s$ 
respectively. This introduces a slight conflict with the usual nomenclature of stable and 
unstable manifolds. For example, if $DF|_{E^c}$ is in fact contracting, 
as in the case where
the CC-condition is satisfied, then our $W^{cs}$-manifolds are what is usually referred 
to as stable manifolds, and our $W^{s}$-manifolds are what is usually called strong
stable manifolds. In the interest of notational consistency among the different sections
within this paper, we will adhere to the notation that $W^*$ denotes the 
invariant manifolds tangent to $E^*$ for $* = u, cu, s$, and $cs$; but
to avoid confusion, we will refrain from using the terms ``stable manifolds"
or ``unstable manifolds" in technical proofs.

\medskip

\begin{proof}[Proof of Theorem \ref{Thm:OpenDenseSet}] 
  Let $F$ be as in Theorem \ref{Thm:OpenDenseSet}. 
Recall from Sect. \ref{Rem:PersUnderSkew} that any sufficiently small $C^1$-perturbation $G$ of $F$ inherits
a dominated splitting $E^s\oplus E^c \oplus E^u$. We note also that the definitions of
fundamental domains and the CC-condition (as defined in Sects. \ref{Sec:MixedBehav} and \ref{Sec:CondDensity} for 
skew-product maps) carry over to $G$ provided $G$ is $C^2$ close to $F$. 
Steps 1 and 2 below prove the existence of an SRB measure for $G$ assuming 
it satisfies the CC-condition.
Step 3 justifies the CC-condition for all $G$ sufficiently close to $F$ in the $C^2$-norm.

\medskip \noindent
 {\it 1. Construction of invariant probability measures with absolutely continuous
conditional measures on $W^u$-leaves.} We follow the standard construction of SRB measures 
for Axiom A attractors in e.g. \cite{Youngergodic}: 
Let $W$ be any finite segment of $W^u$-leaf for $G$,
and let  $m_W$ be the arclength measure on $W$. Letting
\[
\mu_n \ := \ \frac1n \sum_{i=0}^{n-1} G^n_*(m_{W})\ ,
\] 
we are assured that a subsequence of $\mu_n$ will converge weakly, and any limit point 
$\mu_\infty$ is easily shown to have absolutely continuous conditional measures on  $W^u$-leaves.
Moreover, if $\eta$ is a partition of $\T^2 \times \T$ whose elements
are unstable curves of
finite length, then the condition probability densities of $\mu_\infty$ on any $W\in\eta$ is 
precisely $\rho_W$, the reference measures  introduced in Sect. \ref{Sec:CondDensity}.

\medskip \noindent
{\it 2. Proof of SRB  property assuming the CC-condition.}
We now
prove that any $\mu_\infty$ constructed in Item 1 above is an SRB measure assuming that 
$G$ satisfies the CC-condition.  To do that, it suffices to show that $\mu_\infty$-a.e., 
the Lyapunov exponent in the $E^c$-direction is strictly negative.

Observe first that almost every ergodic component of $\mu_\infty$ has conditional densities
on $W^u$-leaves. This is because all points on a $W^u$-leaf have the same 
asymptotic distribution in backwards time, so they cannot be generic with respect to distinct
ergodic measures. 

Now let $\eta$ be a measurable partition of $M=\T^2 \times \T$ with the property that the
elements $W$ of $\eta$ are fundamental domains of $W^u$-leaves (for instance we can pick $\pi_z(\partial W) = \{z_a\}$). Let $\mu$ be an ergodic component of $\mu_\infty$,
 and let $\{\mu_V, V \in G^{-1}\eta\}$ be a regular family of conditional probability
measures on $G^{-1}\eta$.\footnote{This is a benign abuse of notation: 
In Sect. \ref{Sec:CondDensity} we used $\mu_V$ to denote a class of reference measures on $W^u$-leaves, and here
we use the same notation to denote conditional probability measures of the SRB measure $\mu$. These two usages in fact coincide; that was the motivation for the choice of reference measure in Sect. \ref{Sec:CondDensity} to begin with.}
Letting
$\mu/G^{-1}\eta$ denote the quotient measure of $\mu$ on $M/G^{-1}\eta$, we have
\[
\int \log DG_p|_{E^c} \ d\mu (p) = 
\int_{M/G^{-1}\eta} d(\mu/G^{-1}\eta)(V) \int_V \log D G_p|_{E^c} \ d\mu_V(p)\ .
\]
The CC-condition says precisely that for each $V \in G^{-1} \eta$, the $d\mu_{V}$-integral 
 is strictly negative. Thus the integral on the left is strictly negative, implying, by ergodicity of $\mu$, that the Lyapunov
exponent in the $E^c$-direction is strictly negative $\mu$-a.e. Thus it holds $\mu_\infty$-a.e.
 since it holds for every ergodic component $\mu$ of $\mu_\infty$.

\medskip \noindent
{\it 3. Openness of the CC-condition.} 
We will show that if $F$ satisfies the CC-condition with constant $c_1$ in Eq. (\ref{CC}), 
then  for small enough $\delta>0$,  every $G\in\mbox{Diff}^2(\T^2 \times \T)$  
with $d_{C^2}(G,F)<\delta$ will satisfy the CC-condition with constant $\frac12 c_1$.
As we will be comparing estimates for $G$ to those of $F$, 
let us agree to use ordinary notation for $G$ and put a bar above quantities associated with $F$.

Let $ V$ a fundamental domain for $G$.   Then there is a fundamental domain $\bar V$ of $F$ 
that can be made arbitrarily close to $ V$ in the sense that there is a $C^2$ embedding 
$\iota: V\rightarrow \T^2\times \T$ such that $\iota( V)=\bar V$ satisfying the following conditions
as $d_{C^2}(G,F) \to 0$: 
 \begin{align*}
\mbox{(a)}\quad\quad\quad & d_{C^1}(\iota, \Id|_{ V}) \to 0 ,\\
\mbox{(b)}\quad\quad\quad & \sup_{p\in G^{-1} V} \left|DG_{p}|_{ E^c}-D F_{\iota(p)}|_{ \bar E^c}\right| \ \to \ 0\ ,\\
\mbox{(c)}	\quad\quad\quad& 	\sup_{p\in G^{-1}V} \left|\rho_{G^{-1} V}(p)- \bar \rho_{ F^{-1}\bar V}(\iota(p))\right| \ \to \ 0 .
\end{align*} 
\medskip \noindent
The assertions in (a) and (b) follow from  Sect. \ref{Rem:PersUnderSkew}, and the assertion in (c) 
follows from the fact that the distortion estimate in (\ref{density}) converges exponentially 
fast in $n$ so it suffices to control them for a finite number of iterates. More precisely, for any 
$\epsilon>0$  there is $N\in \N$ such that for every  fundamental domain $V$ and any $p,p'\in G^{-1} V$, 
 \[
 \frac{\rho_{{G}^{-1} V}(p)}{\rho_{{G}^{-1} V}(p')}\le (1+\mc O(\epsilon))\prod_{i=1}^{N}\frac{(D{G}_{{G}^{-i}p'})|_{ E^u}}{(D{G}_{{G}^{-i}p})|_{ E^u}},
 \]
and an analogous inequality holds for fundamental domains of $F$.
Moreover, control of the differences in all three items (a), (b), and (c) above is uniform in $V$
and for all $G$ with $d_{C^2}(G,F)<\delta$ for small enough $\delta$.
 \end{proof}

\subsection{Uniqueness of SRB measures}\label{Sec:Uniqueness}

\ \medskip
 The setting is as in Theorem 5.1. Our objective here is to investigate
the uniqueness of the SRB measure for $G \in \mathcal N$.
Assume without loss of generality that the Anosov diffeomorphism $A$ has a fixed
point at $(0,0)$. Then $F$ leaves invariant the circle fiber $\{(0,0)\} \times \T$.
By the persistence of normally hyperbolic manifolds (see (A1)), every $G \in \mathcal N$
leaves invariant a closed curve near $\{(0,0)\} \times \T$; we will call it $\T_0$. 


\begin{lemma} For $G \in \mathcal N$, define
\[
W^u(\T_0) := \cup_{p \in \T_0} W^u(p) \qquad \mbox{and} \qquad
W^s(\T_0) := \cup_{p \in \T_0} W^s(p)\ . 
\]
Then (i) $W^u(\T_0)$ and $W^s(\T_0)$ are dense in $\T^2 \times \T$, and

\quad (ii) every $W^u$-curve meets $W^s(\T_0)$.
\end{lemma}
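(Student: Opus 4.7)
The plan is to identify $W^u(\T_0)$ with $W^{cu}(\T_0)$, the integral leaf of the $G$-invariant subbundle $E^{cu}:=E^u\oplus E^c$ through $\T_0$, and similarly $W^s(\T_0)=W^{cs}(\T_0)$. The inclusion $W^u(\T_0)\subseteq W^{cu}(\T_0)$ is straightforward: both $\T_0$ (tangent to $E^c$) and each $W^u(p)$ for $p\in\T_0$ (tangent to $E^u$) lie in a single $W^{cu}$-leaf. For the reverse inclusion, I would observe that $W^{cu}(\T_0)$ is topologically a cylinder on which $\T_0$ serves as a global cross-section to the $W^u$-subfoliation: for the skew-product $F$, $W^{cu}(\T_0;F)=\pi_{xy}^{-1}(W^u_A((0,0)))$ is a cylinder on which the unstable curves wind monotonically as in Sect.~\ref{Sec:GeomWuleaves} and Fig.~\ref{Fig:AdmissibleLeaves}, each meeting $\T_0$, and this persists for $G\in\mc N$ by the structural stability of the normally hyperbolic $\T_0$ together with its attached unstable manifold and sub-foliation.

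Next I observe that $W^{cu}(\T_0)$ is a union of $\mc G$-leaves: since $\mc G$-leaves are tangent to $E^c\subset E^{cu}$, any $\mc G$-leaf meeting a $W^{cu}$-leaf is globally contained in it. Hence $W^u(\T_0)=W^{cu}(\T_0)=\pi^{-1}(\pi(W^u(\T_0)))$, where $\pi:\T^2\times\T\to Q$ denotes the quotient by $\mc G$. By the leaf conjugacy of Sect.~\ref{Rem:PersUnderSkew}, $Q\cong\T^2$ and the induced quotient dynamics $\tilde G$ is conjugate to $A$; as $\T_0$ projects to a fixed point $\tilde p_0$ of $\tilde G$, the image $\pi(W^u(\T_0))$ equals the $\tilde G$-unstable manifold $W^u_{\tilde G}(\tilde p_0)$, which under the conjugacy corresponds to $W^u_A((0,0))$ and is therefore dense in $Q$. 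Its $\pi$-preimage is dense in $\T^2\times\T$, and by $\mc G$-saturation this is exactly $W^u(\T_0)$. The symmetric argument applied to $G^{-1}$ yields density of $W^s(\T_0)$, completing part (i).

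For part (ii), let $W$ be a $W^u$-curve and let $\pi(W)\subset Q$ be its projection, a short $\tilde G$-unstable arc. By Lemma~\ref{Lem:InvCones}(ii), the length of $\pi(G^n W)=\tilde G^n(\pi(W))$ grows like $c_0\lambda_A^{-n}$. Anosov hyperbolicity of $\tilde G$ on $Q$ combined with denseness of $W^s_{\tilde G}(\tilde p_0)=\pi(W^s(\T_0))$ then forces, for $n$ sufficiently large, a transverse intersection $\tilde q\in \pi(G^n W)\cap \pi(W^s(\T_0))$. By the $\mc G$-saturation of $W^s(\T_0)$ established above, the entire fibre $\pi^{-1}(\tilde q)$ lies in $W^s(\T_0)$; picking any $q\in G^n W$ with $\pi(q)=\tilde q$ gives $q\in W^s(\T_0)$. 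Since $W^s(\T_0)$ is $G$-invariant, $W=G^{-n}(G^n W)$ meets $G^{-n}(W^s(\T_0))=W^s(\T_0)$.

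The main technical obstacle lies in the identification $W^u(\T_0)=W^{cu}(\T_0)$ in the first step: one must verify that every $W^u$-leaf in the cylinder $W^{cu}(\T_0)$ meets the transverse circle $\T_0$. For $F$ this is geometrically evident from the spiral picture in Fig.~\ref{Fig:AdmissibleLeaves}, but for a perturbation $G\in\mc N$ it requires the persistence of the normally hyperbolic $\T_0$ and of the subfoliation of $W^{cu}(\T_0)$ by unstable leaves under small $C^2$-perturbations; once that is in hand, the rest of the argument is a straightforward application of the leaf conjugacy and Anosov transitivity in the quotient.
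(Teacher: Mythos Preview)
Your approach is essentially the same as the paper's: both arguments establish that $W^s(\T_0)$ (resp.\ $W^u(\T_0)$) is $\mathcal G$-saturated and then use the leaf conjugacy of Sect.~\ref{Rem:PersUnderSkew} to reduce to the skew-product $F$, where the corresponding object is $W^{s/u}_A(0,0)\times\T$, dense by Anosov transitivity.

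There are two places where the paper is more direct. First, for what you call the ``main technical obstacle'' (the identification $W^u(\T_0)=W^{cu}(\T_0)$), the paper simply cites Theorem~4.1 of \cite{hirsch1977invariant}: the stable manifold of the normally hyperbolic $\T_0$ is an immersed $2$-manifold fibered by strong stable leaves, hence equals $\cup_{p\in\T_0}W^s(p)$ by definition; a short Lyapunov-exponent argument then shows it is tangent to $E^{cs}$, from which the $\mathcal G$-saturation and the characterization as ``the union of $\mathcal G$-leaves tending to $\T_0$'' follow. Your spiral/persistence argument is a hands-on special case of this HPS result. Second, for part~(ii) the paper dispenses with the quotient and the iteration entirely: once $W^s(\T_0)$ is known to be a \emph{densely} immersed surface tangent to $E^{cs}$, any $W^u$-curve---being transverse to the $W^{cs}$-foliation---crosses an interval's worth of local $W^{cs}$-plaques, and density forces one of those plaques to lie in $W^s(\T_0)$.
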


\begin{proof} (i) We prove the claim for $W^s(\T_0)$; the proof for $W^u(\T_0)$ is analogous.

As $\T_0$ is invariant and normally hyperbolic, $W^{s}(\T_0)$ is an immersed submanifold; see Theorem 4.1 of \cite{hirsch1977invariant}. We know also that $W^{s}(\T_0)$ is tangent to $E^{s}\oplus E^{c}$
because for every $q \in W^{s}(\T_0)$, there is $p \in \T_0$ such that 
that $d(F^np, F^nq)\to 0$ exponentially fast, so $q$ must have the same Lyapunov
exponents as $p$. 

{}From Sect. 4.2.3, we know that there is a $G$-invariant foliation $\mathcal G$
the leaves of which are closed curves tangent to $E^c$. This implies that $W^{s}(\T_0)$ 
is foliated by the leaves of $\mathcal G$, and in fact that $W^{s}(\T_0)$ is the union of 
$\mathcal G$-leaves 
that under iterates of $G$ tend to $\T_0$ as $n \to \infty$. Now it is also known that 
there is a homeomorphism $H: \T^2 \times \T \to \T^2 \times \T$ that is a leaf-conjugacy 
between $F$ and $G$, meaning it sends vertical fibers of the skew product map $F$
to the leaves of $\mathcal G$. Calling $\bar \T_0$ and $\bar W^s(\bar \T_0)$ the corresponding 
objects for $F$, we have, from the characterization of $W^s(\T_0)$ above, that
$H(\bar \T_0)=\T_0$ and $H(\bar W^{s}(\bar \T_0))=W^{s}(\T_0)$.
Clearly,  $\bar W^s(\bar \T_0)=W_A^s(0,0)\times \T$ is dense in $\T^2 \times \T$ as the
stable manifold $ W_A^s(0,0)$ of $A$ is dense in $\T^2$. Therefore $W^{s}(\T_0)$ is also
dense since $H$ is a homeomorphism.

This completes the proof of (i).

\medskip \noindent 
(ii) follows immediately from the fact that $W^s(\T_0)$ is a densely immersed submanifold tangent
to $E^c \oplus E^s$.
\end{proof}


\smallskip
In the proofs of the results to follow, our strategy is to connect the dynamics of 
$G|_{\T_0}$ to those of $G$ by

-- using $W^s(\T_0)$ to draw relevant sets close to $\T_0$, 

-- using $G|_{\T_0}$ to transport the sets around this fixed fiber and then 

-- using $W^u(\T_0)$ to deliver them to where we would like them to go.

\medskip
Let $\mu$ be an ergodic SRB measure. We introduce the following definition for use below: We call a piece 
of $W^u$-leaf $W$ ``$\mu$-typical" if the following hold at $m_W$-a.e. 
$p$ (recall that $m_W$ is the arclength measure): 

(i) $p$ is generic with respect to $\mu$, i.e., for all continuous
$h: \T^2 \times \T \to \R$, 
\begin{equation} \label{generic}
\lim_{n\rightarrow \infty}\frac{1}{n} \sum_{i=0}^{n-1} h(G^ip) = \int_{\T^2\times \T} h d\mu  ;
\end{equation}

(ii)  $G$ has two strictly negative Lyapunov exponents at $p$ and 
$W^{cs}_{\rm loc}(p)$ is well defined with the property that for every $q\in W^{cs}_{\rm loc}(p)$, $d(G^{n}(p),G^n(q))\rightarrow 0$ as $n\rightarrow\infty$.

\medskip \noindent
 Let $W^*$ denote the set of all $p \in W$ with the properties in (i) and (ii).
By the absolute continuity of $W^{cs}$-manifolds  together with property (ii),
\[
W^{cs}_{\rm loc}(W^*) :=  \cup_{p \in W^*} W^{cs}_{\rm loc}(p)
\]
has positive  Lebesgue measure on $\T^2\times \T$, and all
$q \in W^{cs}_{\rm loc}(W^*)$ are generic with respect to $\mu$.

\smallskip
\begin{proposition} If $G \in \mathcal N$ is such that $G|_{\T_0}$ 
is conjugate to an irrational rotation, then it has a unique SRB measure.
\end{proposition}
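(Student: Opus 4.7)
The plan is to show that the basins of any two ergodic SRB measures would both have full Lebesgue measure in a common open neighbourhood of $\T_0$, contradicting the disjointness (modulo Lebesgue-null sets) of basins of distinct ergodic measures. Combined with the existence statement and the ergodic decomposition of SRB measures from Theorem \ref{Thm:OpenDenseSet}, this yields uniqueness of the SRB measure.

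First, I would argue that for any ergodic SRB measure $\mu$ and any $\mu$-typical $W^u$-curve $W$, the saturation $W^{cs}_{\rm loc}(W^*)$ sits inside $\mc B(\mu)$, and that by the absolute continuity of the $W^{cs}$-foliation it in fact has full Lebesgue measure in the tube $T_W := \bigcup_{p\in W} W^{cs}_{\rm loc}(p)$. By $G$-invariance of $\mc B(\mu)$ and $G$-equivariance of the foliations, $\mc B(\mu)$ has full Lebesgue measure in each $T_{G^nW}$ for every $n\in\Z$, with transverse tube widths uniformly bounded below by the uniform size of local $W^{cs}$-plaques.

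Next I would leverage the strategy outlined in the text. By the preceding lemma, $W$ meets $W^s(\T_0)$ at some $p_0$; write $r_0\in\T_0$ for the shadowing point, so that $d(G^np_0, G^nr_0)\to 0$. Since $G|_{\T_0}$ is conjugate to an irrational rotation, the orbit $\{G^nr_0\}$ is dense in $\T_0$, so for every $q\in\T_0$ there is a subsequence $n_k\to\infty$ with $G^{n_k}p_0\to q$, and by continuity of the $W^u$-foliation the local unstable manifolds $W^u_{\rm loc}(G^{n_k}p_0)\subset G^{n_k}W$ converge to $W^u_{\rm loc}(q)$ in the $C^1$ sense. A fixed-size $W^{cs}$-tube around $W^u_{\rm loc}(q)$ is therefore contained, up to an arbitrarily small Hausdorff error, in $T_{G^{n_k}W}$ for large $k$, and hence carries full Lebesgue measure inside $\mc B(\mu)$. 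Taking the union over $q\in\T_0$ produces an open neighbourhood $U$ of $\T_0$, determined only by the uniform geometry of $G$ and not by the particular choice of $\mu$, in which $\mc B(\mu)$ has full Lebesgue measure.

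The conclusion is then immediate: if there were two ergodic SRB measures $\mu_1\neq\mu_2$, both $\mc B(\mu_1)$ and $\mc B(\mu_2)$ would have full Lebesgue measure in the same $U$, so their intersection would have positive Lebesgue measure; but no point can be generic for two distinct Borel probability measures, so the basins of distinct ergodic measures are disjoint modulo Lebesgue-null sets. Uniqueness of the ergodic SRB measure then upgrades to uniqueness of the SRB measure via ergodic decomposition. \emph{The main obstacle I anticipate} is the Hausdorff-limit step: one must verify that the $W^{cs}$-tube around $W^u_{\rm loc}(q)$ is genuinely enveloped by $T_{G^{n_k}W}$ for large $k$, which demands uniform lower bounds on the sizes of both the local $W^u$-plaques (so that $G^{n_k}W$ still contains a $W^u_{\rm loc}$-piece of definite size around $G^{n_k}p_0$ for all large $k$) and the $W^{cs}$-plaques, together with the continuity of both foliations. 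All of these are available under the standing partial-hyperbolicity hypotheses.
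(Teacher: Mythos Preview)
Your overall strategy---pull typical unstable curves to $\T_0$ along $W^s(\T_0)$, then use minimality of $G|_{\T_0}$ to transport them---matches the paper's. The genuine gap is earlier than the Hausdorff-limit step you flag: it is the claim in your first paragraph that $W^{cs}_{\rm loc}(W^*)\subset\mc B(\mu)$ and that this set has \emph{full} Lebesgue measure in $T_W$. These two assertions are in tension. If $W^{cs}_{\rm loc}(p)$ denotes the uniform-size plaque supplied by partial hyperbolicity, then absolute continuity of the $W^{cs}$-foliation does give full measure of the saturation in $T_W$, but you cannot conclude $W^{cs}_{\rm loc}(p)\subset\mc B(\mu)$: the central derivative is sometimes expanding (Proposition~\ref{Prop:DensityLyap}), so two points on the same partial-hyperbolic $W^{cs}$-plaque need not share forward asymptotics. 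If instead $W^{cs}_{\rm loc}(p)$ denotes the Pesin local stable manifold at a $\mu$-typical $p$ (which is the paper's usage in item~(ii) just before the proposition), then the inclusion in $\mc B(\mu)$ holds by construction, but the sizes of these manifolds are governed by nonuniform Pesin constants and there is no reason their union fills $T_W$; the text only asserts \emph{positive} Lebesgue measure of $W^{cs}_{\rm loc}(W^*)$. Either reading breaks the full-measure claim, and with it your disjoint-basins contradiction. (A side remark: even if the argument worked, $U$ would depend on the Pesin constants of $\mu$, not ``only on the uniform geometry of $G$''; this is harmless since any two such neighbourhoods of $\T_0$ still overlap, but it is worth noting.)

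The paper avoids this by never needing full measure. It fixes a $\mu_1$-typical curve $W_1$ and a positive-$m_{W_1}$-measure set $\Gamma\subset W_1^*$ on which the Pesin stable manifolds have size at least $2\varepsilon$, then---by the same $W^s(\T_0)$/minimality mechanism you describe, combined with the density of $W^u(\T_0)$---arranges for a forward iterate of any $\mu_2$-typical curve $W_2$ to cross the tube $\mc U_\varepsilon(W_1)$. Absolute continuity of the $W^{cs}$-holonomy then produces a positive-$m_{W_2}$-measure set of points on this iterate lying in $\bigcup_{p\in\Gamma}W^{cs}_{2\varepsilon}(p)\subset\mc B(\mu_1)$, forcing $\mu_2=\mu_1$. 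The move you are missing is to compare two typical curves directly via holonomy, rather than trying to certify a fixed region of phase space as a full-measure basin.
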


\begin{proof}
Let $\mu_1$ and $\mu_2$ be ergodic SRB measures, and fix $W_1$, a $\mu_1$-typical 
$W^u$-curve of finite length. For $\varepsilon>0$, let 
$\mathcal U_\varepsilon(W_1)$ be  the tubular neighhorbood of $W_1$ given by
 $\mathcal U_\varepsilon(W_1) = \cup_{p \in W_1} \mbox{exp}_p B^{cs}_\varepsilon(p)$
 where 
$B^{cs}_\varepsilon(p)$ is the $\varepsilon$-ball in $E^{cs}(p)$ centered at $p$, and 
 $\exp_p$ is the exponential map at $p$.
We claim that there exists $\varepsilon=\varepsilon(W_1)>0$
such that  if there is a $\mu_2$-typical $W^u$-curve $W_2$ roughly
parallel to $W_1$ that traverses the length of $\mathcal U_\varepsilon(W_1)$, 
then $\mu_2=\mu_1$. This is true because for 
$\varepsilon$ small enough, there is  $\Gamma\subset W^*_1$ with  
$m_{W_1}(\Gamma)>0$ such that  for all $p \in \Gamma$, $W^{cs}_{2\varepsilon}(p)$ is well defined
and contains a full cross-section of $\mathcal U_\varepsilon(W_1)$. By the absolute continuity
of the $W^{cs}$-foliation, $W_2$ 
will meet $\cup_{p \in \Gamma} W^{cs}_{2\varepsilon}(p)$ in a positive $m_{W_2}$-measure set,
and all points in this set are generic with respect to $\mu_1$.

Since $W^u(\T_0)$ is dense in $\T^2 \times \T$  (Lemma 5.3 (i)),
there exist $p' \in \T_{0}$ and a finite-length segment 
$V'$ of $W^u(p')$ containing $p'$ that crosses the length 
of $\mathcal U_{\frac14 \varepsilon}(W_1)$. 
It follows that for $\varepsilon'>0$ small enough, for all 
$p'' \in \T^2 \times \T$ with $|p''-p'|< \varepsilon'$, 
a subsegment $V''$ of $W^u(p'')$ near $V'$
will cross $\mathcal U_{\frac12 \varepsilon}(W_1)$. By the topological
transitivity of $G|_{\T_0}: \T_0 \circlearrowleft$, there exists $\varepsilon''>0$
such that any segment of $W^u$-curve meeting any part of the $\varepsilon''$-neighborhood of
$\T_0$ will, when iterated forward, be transported near $p'$ and eventually  
cross $\mathcal U_\varepsilon(W_1)$.

Let $W_2$ be any $\mu_2$-typical curve.
 By Lemma 5.3(ii), $W_2$ meets $W^s(q)$ for some $q \in \T_0$.
Iterating forward, $W_2$ is brought
as close to $\T_0$ as we wish, and by the argument above, a future iterate of
$W_2$ -- which is also $\mu_2$-typical -- can be made to cross 
$\mathcal U_\varepsilon(W_1)$ implying $\mu_2=\mu_1$.
\end{proof}

Recall that in the space of $C^2$ diffeomorphisms of the circle $\T$, an open and
dense set consists of maps with a finite number of sinks and sources, and with
the property that every
orbit approaches a sink in forward time and a source in backward time. 
For $k=1,2,\cdots$, let $\mathcal N_k$ be the subset of $\mathcal N$ for which 
$F|_{\T_0}$ has the property above with exactly $k$ attractive periodic orbits.
 It is easy to see that $\cup_k \mathcal N_k$ is an open and dense 
subset of $\mathcal N$.

\smallskip
\begin{theorem} For each $k \ge 1$, there is an open and dense subset  
$\mathcal N_k' \subset \mathcal N_k$ with the property that all $G \in 
\mathcal N'_k$ have $\le k$ ergodic SRB measures. In particular, 
uniqueness of SRB measures is enjoyed by all $G \in \mathcal N'_1$. 
\end{theorem}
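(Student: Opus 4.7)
The plan is to extend Proposition~5.4 from the irrational-rotation regime to the Morse--Smale regime. For $G\in\mathcal N_k$, let $\mathcal O_1,\dots,\mathcal O_k$ denote the $k$ attractive periodic orbits of $G|_{\T_0}$ and $B_i\subset\T_0$ their basins under $G|_{\T_0}$. Since $\T_0$ is tangent to $E^c$ and $\mathcal O_i$ is attractive on $\T_0$, each $\mathcal O_i$ is contracting in $E^c$; combined with the uniform $E^u$-expansion and $E^s$-contraction this makes $\mathcal O_i$ hyperbolic with one positive and two negative Lyapunov exponents and a one-dimensional unstable manifold $W^u(\mathcal O_i)$. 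Set $\Lambda_i:=\overline{W^u(\mathcal O_i)}$. Pushing the arclength measure on a finite segment of $W^u(\mathcal O_i)$ forward by $G^n$ and Ces\`aro-averaging, exactly as in Step~1 of the proof of Theorem~\ref{Thm:OpenDenseSet}, yields a $G$-invariant probability measure $\mu_i$ supported on $\Lambda_i$ with absolutely continuous conditionals on $W^u$-leaves; the CC-condition, which is preserved in a $C^2$-neighborhood of $F$ by Step~3 of that proof, makes the $E^c$-exponent of every ergodic component of $\mu_i$ strictly negative, so each $\mu_i$ is SRB. I would then take $\mathcal N'_k\subset\mathcal N_k$ to be the set of $G$ for which each $\mu_i$ is ergodic and the measures $\mu_1,\dots,\mu_k$ are pairwise distinct.

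The crux of the proof is to show that every ergodic SRB measure $\mu$ of any $G\in\mathcal N'_k$ coincides with some $\mu_i$. Let $W$ be a $\mu$-typical $W^u$-curve. By Lemma~5.3(ii) there is a point $p\in W\cap W^s(q)$ with $q\in\T_0$; since $W^s(\mathrm{sources})$ is a finite union of one-dimensional strong-stable leaves, hence nowhere dense in $W^s(\T_0)$, after iterating $W$ forward finitely many times if needed we may take $q\in B_i$ for some $i$. Then $G^nq\to\mathcal O_i$, and by the uniform $E^u$-expansion together with the $C^1$-continuity of local unstable disks in their basepoint, a bounded neighborhood of $p$ inside $W$ is carried by $G^n$ onto a $W^u$-curve of uniformly bounded arclength which converges in the $C^1$-topology to a local piece of $W^u(\mathcal O_i)$. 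Fixing a $\mu_i$-typical segment $W_i\subset W^u(\mathcal O_i)$, this iterate of $W$ eventually lies inside any prescribed $W^{cs}$-tubular neighborhood $\mathcal U_\varepsilon(W_i)$, and the absolute-continuity-of-$W^{cs}$ argument from the proof of Proposition~5.4 then forces $\mu=\mu_i$. Since the $\mu_i$'s are pairwise distinct on $\mathcal N'_k$, the assignment $\mu\mapsto i$ is an injection from the ergodic SRBs of $G$ into $\{1,\dots,k\}$, proving the bound.

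Openness of $\mathcal N'_k$ will follow from the continuous dependence of $\mathcal O_i$, $W^u(\mathcal O_i)$ and hence $\mu_i$ on $G$, using hyperbolicity of each $\mathcal O_i$, normal hyperbolicity of $\T_0$, and uniform control of the distortion estimate~\eqref{Eq:bndDist} and of the exponential convergence in~\eqref{density}. Denseness will follow from localized $C^2$-perturbations supported in small balls away from $\T_0$, analogous to the flowing-vector-field construction in the proof of Theorem~\ref{Thm:SmallPertGeom}, which can displace a single $\mu_i$ while leaving the other $\mu_j$ and the structure of $G|_{\T_0}$ intact, and so break any accidental coincidence. The main obstacle lies in the central step: one must control simultaneously the uniform size of the iterate of $W$ that enters $\mathcal U_\varepsilon(W_i)$ and the rate at which this $W^u$-piece aligns with $W^u(\mathcal O_i)$, with constants that work uniformly for all ergodic SRBs $\mu$ under the partially hyperbolic bounds of Section~\ref{Sec:AssumDom}; reconciling this uniformity with a possibly very weak CC-condition is the most delicate point of the argument.
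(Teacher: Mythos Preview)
Your overall strategy matches the paper's: build reference SRB measures $\mu_i$ from $W^u(\mathcal O_i)$ and show via the Hopf-type absolute-continuity argument of Proposition~5.4 that every ergodic SRB must coincide with one of them. The genuine gap, however, is not the alignment issue you flag at the end but an earlier step that you dismiss too quickly.

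You claim that since $W^s(\text{sources})$ is nowhere dense in $W^s(\T_0)$, a $\mu$-typical curve $W$ can be assumed to meet $W^s(q)$ with $q$ in some basin $B_i$. This inference is invalid. The curve $W$ is one-dimensional and meets the densely immersed two-dimensional submanifold $W^s(\T_0)$ in a countable transversal set; nowhere-denseness of $W^s(\text{sources})$ inside $W^s(\T_0)$ does not prevent that countable set from lying entirely on the strong-stable leaves of the repelling periodic points. ``Iterating $W$ forward finitely many times'' does not help either: if every intersection of $W$ with $W^s(\T_0)$ lies over a source ${\bf 0}_r$, the forward images of $W$ accumulate on $W^u({\bf 0}_r)$, and if $W^u({\bf 0}_r)$ itself never meets $W^s(\T_0\setminus\{\text{sources}\})$ you are stuck. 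This is exactly the obstruction the paper singles out. The paper \emph{defines} $\mathcal N'_k$ by the geometric condition that $W^u$ of each repelling periodic point on $\T_0$ meets $W^s(\T_0\setminus\{\text{sources}\})$; openness is then immediate, and denseness comes from an explicit local perturbation that pushes one intersection point of $W^u({\bf 0}_r)$ with a center leaf off the strong-stable leaf of ${\bf 0}_r$. With this condition in place, the $\lambda$-lemma routes $W$ through $W^u({\bf 0}_r)$ into the basin of some sink, after which your alignment argument goes through without difficulty.

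Your choice of $\mathcal N'_k$ also creates unnecessary trouble. Requiring each $\mu_i$ to be ergodic is a measure-theoretic condition for which you offer no argument for openness or denseness; the paper avoids this entirely by taking $\mu_i$ to be an \emph{ergodic component} of the pushed-forward measure. And requiring the $\mu_i$ to be pairwise distinct is simply not needed---coincidences only decrease the count---which is why the paper explicitly allows them to coincide and concludes $\le k$ rather than $=k$.
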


\begin{proof} We treat first the case $k=1$, assuming that the attractive periodic
cycle is a fixed point, i.e., we assume $G|_{\T_{\bf 0}}$ has a repelling fixed 
point at ${\bf 0}_r \in \T^2 \times \T$, an attractive fixed point at ${\bf 0}_a$, 
and all points  in $\T_0$ other 
than ${\bf 0}_r$ are attracted to ${\bf 0}_a$. Generalization to the periodic case
is straightforward and left to the reader. 

Let $G \in \mathcal N_1$. We construct an SRB measure  by
pushing forward  Lebesgue measure on $W^u_{\rm loc}({\bf 0}_a)$. 
Let $\mu_1$ be an ergodic component of the measure constructed. 
Let $W_1$ be a $\mu_1$-typical $W^u$-curve,
and let $\mathcal U_\varepsilon(W_1)$ be as in Proposition 5.3.
By construction, there is a segment $V \subset W^u({\bf 0}_a)$
such that $V$ crosses $\mathcal U_{\frac12 \varepsilon}(W_1)$.

Let $\mu_2$ be another ergodic SRB measure, and let $W_2$ be a $\mu_2$-typical 
$W^u$-leaf.  By Lemma 5.3(ii), $W_2$ meets $W^s(q)$ for some $q \in \T_0$.
 If $q \ne {\bf 0}_r$, then since $F^n q \to {\bf 0}_a$, $W_2$ will,
under forward iterates, be brought as close to ${\bf 0}_a$ as we wish, and a future image of it will
cross $\mathcal U_\varepsilon(W_1)$ proving $\mu_2=\mu_1$.

The only problematic scenario is when  $W_2 \cap W^s(\T_0 \setminus \{{\bf 0}_r\})
= \emptyset$ for all $\mu_2$-typical curves,
i.e., they meet only $W^s({\bf 0}_r)$. 
Observe that this can happen only when $W^u({\bf 0}_r) 
\cap W^s(\T_0 \setminus \{{\bf 0}_r\}) = \emptyset$, for otherwise 
$W_2$ will be brought near ${\bf 0}_r$ by $W^s({\bf 0}_r)$; from there 
it will follow $W^u({\bf 0}_r)$ and run into  $W^s(\T_0 \setminus \{{\bf 0}_r\})$ after all. 
It suffices therefore to show that with an arbitrarily small perturbation of $G$, one 
can cause $W^u({\bf 0}_r)$ to intersect $W^s(\T_0 \setminus \{{\bf 0}_r\})$. Such 
a condition is clearly open. This will be our definition of $\mathcal N'_1$.

A specific way to make such a perturbation is as follows: Let $\Gamma^u$ 
be a segment of $W^u({\bf 0}_r)$ containing ${\bf 0}_r$ of finite length. 
For $p \in \T^2 \times \T$, let $W^c(p)$ denote the circle fiber tangent to 
$E^c$ through $p$, and let 
$W^c(\Gamma^u) = \cup_{p \in \Gamma^u} W^c(p)$.
Now pick $\Gamma^s$, a segment of $W^s({\bf 0}_r)$ containing ${\bf 0}_r$
with the property that $W^c(\Gamma^u)$ and $W^c(\Gamma^s)$ meet in exactly
two $W^c$-fibers,  $\T_0$ and $W^c(p^*)$ for 
some $p^* \in \Gamma^u$. 
Since by assumption, $W^u({\bf 0}_r)$ does not meet 
$W^s(\T_0 \setminus \{{\bf 0}_r\})$, we must have  
$p^* \in \Gamma^s$. Perturb $G$ in a small neighborhood $U$
of $G^{-1}p^*$ to a map $G'$ so that $G'(G^{-1}p^*) \in 
W^s(\T_0 \setminus \{{\bf 0}_r\})$. Provided that $U$ is away from 
${\bf 0}_r$ and from $\Gamma^s$, we have arranged to have 
$W^u({\bf 0}_r; G')$ meet $W^s(\T_0 \setminus \{{\bf 0}_r; G')$
where $W^u(\cdot; G')$ and $W^s(\cdot; G')$ refer to objects associated
with the map $G'$.

This completes the proof of the $k=1$ case.

\medskip
For $k >1$, we sketch a proof again assuming (for simplicity of notation)
that the attractive periodic orbits are fixed points. Let 
${\bf 0}_{a,1}, \cdots, {\bf 0}_{a,k}$ be the attractive fixed points,
and for each ${\bf 0}_{a,i}$, construct an ergodic SRB measure $\mu_i$ 
by pushing forward Lebesgue measure on a segment of $W^u({\bf 0}_{a,i})$
as was done in the case $k=1$. Repeating the $k=1$ proof {\it verbatim}, 
we show that the $\mu_i$ are the only ergodic SRB measures.
We do not know, however, that the $\mu_i$ are necessarily distinct.
That is why we conclude only that the number of ergodic SRB measures
is $\le k$.
\end{proof}

\subsection{Relation to the existing literature} \label{Sec:LiterRev}

\ \medskip

We recapitulate the situation and discuss our results as they relate
to the existing literature. For all diffeomorphisms of $\T^2 \times \T$ that are
$C^1$-small perturbations of the skew product maps of the form in (\ref{Eq:SkewF}),
we proved the existence of a splitting $E^u \oplus E^c \oplus E^s$ with a uniform
domination condition.   General theories of hyperbolic systems with 
domination conditions have been 
studied, among others, in \cite{brin1973partially,diaz1999partial, bochi2003lyapunov, bonatti2003c1, abdenur2006global, pujals2009dynamics} (see  \cite{hirsch1977invariant} for a more systematic treatment). Assuming additionally that the map is $C^2$, the existence
of what is called a $u$-Gibbs measure followed \cite{pesin1982gibbs}.
Call this measure $\mu$.
  We further identified a checkable condition under which
 we proved that dynamics in the center bundle is contracting $\mu$-a.e., so that
$\mu$ is in fact an SRB measure. Uniformly partially hyperbolic
systems with a contracting center have been studied, among others in \cite{bonatti2000srb, dolgopyat2000dynamics, bonatti2005transitive,dolgopyat2016geometric}.

What is different  in this paper
 is that we did not start from these dynamical hypotheses. We
coupled together two much studied dynamical systems in a particular way, 
deduced the dynamical picture above and proved the existence of SRB measures.
We further demonstrated  that
in our setting, while the dynamics in $E^c$ is contracting $\mu$-a.e.,
the set of points at which it is asymptotically expanding is dense, showing genuinely
mixed behavior in this ``neutral" direction.

These examples join the handful of concrete examples of nonuniformly hyperbolic systems
known to have SRB measures, including e.g. \cite{young1985bowen, benedicks1993sinai, wang2002invariant, lu2013strange}. The examples in this paper possess invariant cones
and are much closer to uniform hyperbolicity. They contribute nevertheless
to expanding the relatively small
group of dynamical systems known to have SRB measures.

Setups having similarities -- as well as differences -- to ours were considered in \cite{B18,H12,Y93}. In \cite{Y93}, the fiber maps come from the projectivization of a 2D matrix
cocycle. It is a special case of our skew-product maps; the domination condition is also different.
\cite{B18} is a generalization of \cite{Y93} to the skew-product setting; very high rates of expansion 
and contraction for $g$ are imposed.
\cite{H12} builds on results from \cite{ruelle2001absolutely}, and studies  volume preserving  perturbations of some skew-product systems (e.g. $A\times \Id_\T$). There volume-preservation
is assumed throughout, and it is used to study finer geometric properties of the system. 
Our setup is more flexible, and the existence of a physical measure is not {\it a priori} 
guaranteed. 

\section{Rare but Strong Interaction}\label{Sec:RareStrongInterac}

In this section, we continue to consider skew product maps of the form 
\begin{equation} \label{F4}
F(x,y,z) = (A(x,y), g(z+r(x))
\end{equation}
where $A: \T^2\circlearrowleft$ is an Anosov diffeomorphism, and $g$ and $r$ are
as before but $r$ is a degree-one map supported on a very small interval 
$I_\varepsilon$ of length $\varepsilon>0$. We call these interactions ``rare" because
a typical orbit of $A$ visits the strip $I_\varepsilon \times \T$ with frequency $\sim
\varepsilon$, and when it is outside of this strip, there is no interaction between the
base and the fiber map. In terms of its action on fibers, 
the deterministic dynamical system $F$ has the flavor of random rotations 
followed by long relaxation periods during which the orbit is attracted to 
$z_a \in \T$ (unless it is stuck at the other fixed point $z_r$), to be rotated again
by a random amount at a random time.

\subsection{ Assumptions and Results}\label{Sec:ResStrong}

\ \medskip

 Let $\partial^A_u(\cdot)$ denote the partial derivative of a function on $\T^2$ 
in the direction of $E^u_A$, in the direction where $x$ is increasing.
The following three conditions are assumed:

\bigskip \noindent
(B1) $\lambda_g^{max}>\max_p\|DA_p\|$

\medskip 
For each $\varepsilon>0$, 

\medskip \noindent
(B2)  {\it $r=r_\varepsilon:\T \to \T$ is a degree-one map 
with $r'|_{I_\epsilon} >0$ and $r|_{I_\epsilon^c}=0$ where $I_\varepsilon \subset \T$ 
is an open interval of length $\varepsilon$.}

\medskip \noindent
(B3) {\it viewing $r$ as a function on $\T^2$, we assume
$$
\partial_u^A(r) >  (2\varepsilon)^{-1} \quad \mbox{on } \T \setminus r^{-1} (-d, d) \quad \mbox{where} \quad d < \mbox{dist}(\partial I^+, \partial (g(I^+)))\ 
$$
where $I^+$ is defined below.}

\bigskip
Condition (B1) allows us to fix $c_1 \in (0,1)$, a neighborhood $I^+$ of the repelling fixed point 
$z_r$ of $g$, and $C_0 > 1$ such that 
(i) $g'\ge c_1^{-1}$ on $I^+$, and
(ii) $c_1^{-1} > C_0\max_p\|DA_p\|$.
This is the $I^+$ in (B3). 
Additionally we will fix $c_0<1$ and a neighborhood $I^- \subset \T$ of the attractive fixed point $z_a$ such that (i) $g' \le c_0$ on $I^-$ and (ii) $g(I^+) \cap I^- = \emptyset$. See Figure \ref{Fig:HypIllustr} below.

\begin{figure}[h!]
\center
\includegraphics[page=5, scale=0.5]{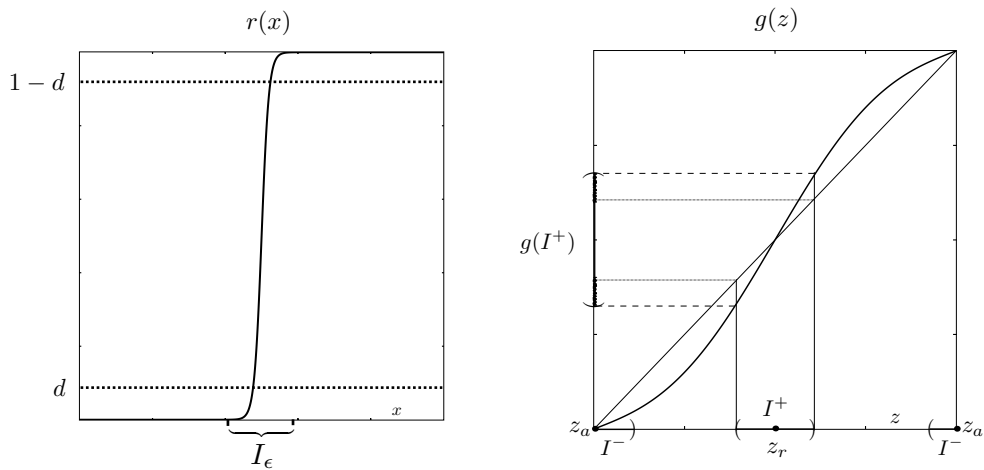}
\caption{The pictures above illustrate assumptions (B2) and (B3). On the left is the graph of an example of $r$: $r=0$ outside of $I_\epsilon$ and is very steep when $r(x)$ is in between $d$ and $1-d$. On the right is the graph of an example of $g$. The highlighted intervals are $g(I_+)\backslash I_+$; each component has diameter less than $d$. }
\label{Fig:HypIllustr}
\end{figure}

\bigskip
The main results of this section are as follows.

\begin{theorem} \label{Thm:6}
 Given $A$ and $g$ satisfying (B1), we assume for each $\varepsilon>0$
that $r=r_\varepsilon$ is chosen so (B2) and (B3) are satisfied.
 Then for all 
$\varepsilon>0$ small enough, $F$ has an SRB measure $\nu_\varepsilon$
with two negative Lyapunov exponents. Moreover, as $\varepsilon \to 0$, $\nu_\varepsilon$ converges weakly to $\mu \times \delta_{z_a}$
where $\mu$ is the SRB measure of $A$.
\end{theorem}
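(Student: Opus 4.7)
My plan is to follow the template of the proof of Theorem~\ref{Thm:OpenDenseSet}: construct unstable curves for $F$, form a u-Gibbs measure by pushing arclength along one of them, and reduce the SRB property to negativity of the center Lyapunov exponent. The new ingredient is that the negativity of the center exponent will come not from a cone/domination argument as in Section~\ref{Sec:CondDensity} but from a relaxation mechanism: between the sparse interactions the fiber dynamics reduces to iteration of $g$, which drives $z$ exponentially fast toward the sink $z_a$.

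The geometric skeleton proceeds as in Section~\ref{Sec:AssumDom}. Since $F$ is a skew product with $\partial_u^A r \ge 0$, the restriction of $DF$ to the invariant plane $E_A^u \oplus \langle\partial_z\rangle$ is lower triangular in the frame $\{\partial_A^u,\partial_z\}$ with entries
\[
\alpha^u(x,y),\qquad g'(z+r(x))\,\partial_u^A r(x),\qquad 0,\qquad g'(z+r(x)),
\]
all nonnegative. Consequently the first-quadrant cone $\{v_u\ge 0,\,v_z\ge 0\}$ is $DF$-invariant and its $v_u$-component is amplified by $\alpha^u\ge \lambda_A^{-1}$ at each step; combined with uniform contraction in $E_A^s$, the standard graph-transform argument produces $F$-invariant unstable curves $W^u(p;F)$ projecting diffeomorphically onto $W_A^u(\pi_{xy}p)$ under $\pi_{xy}$. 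Given any short $W^u(F)$-curve $W$ with arclength $m_W$, I set
\[
\nu_\varepsilon := \lim_{k\to\infty}\tfrac1{n_k}\sum_{i=0}^{n_k-1}F^i_* m_W
\]
along a weakly convergent subsequence. The distortion estimate \eqref{Eq:bndDist} carries over verbatim, so $\nu_\varepsilon$ is an $F$-invariant probability with absolutely continuous conditionals on its $W^u(F)$-leaves.

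I then identify the horizontal marginal and the fiber conditionals. The pushforward $(\pi_{xy})_*\nu_\varepsilon$ is $A$-invariant, and because $\pi_{xy}$ restricts to a local diffeomorphism from $W^u(p;F)$ onto $W_A^u(\pi_{xy}p)$, absolute continuity of the $W^u(F)$-conditionals passes through to the $W_A^u$-conditionals of the marginal; uniqueness of the Anosov SRB then forces $(\pi_{xy})_*\nu_\varepsilon=\mu$. Disintegrate $\nu_\varepsilon=\int \nu_\varepsilon^{(x,y)}\,d\mu(x,y)$. The crux of the proof is the claim that $\nu_\varepsilon^{(x,y)}\to \delta_{z_a}$ weakly for $\mu$-a.e.\ $(x,y)$ as $\varepsilon\to 0$. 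Letting $x_{-k}$ denote the first coordinate of $A^{-k}(x,y)$, $F$-invariance gives
\[
\nu_\varepsilon^{(x,y)} \;=\; \bigl(g(\cdot+r(x_{-1}))\circ\cdots\circ g(\cdot+r(x_{-n}))\bigr)_*\nu_\varepsilon^{A^{-n}(x,y)}\qquad (n\ge 1).
\]
By Birkhoff for $(A,\mu)$, the backward orbit of $\mu$-a.e.\ $(x,y)$ visits $I_\varepsilon\times\T$ with asymptotic frequency $\mu(I_\varepsilon\times\T)\to 0$. Between visits the fiber map is pure $g$, which contracts $\T\setminus\{z_r\}$ exponentially toward $z_a$ at rate $\le c_0<1$ on $I^-$ and ejects any point at distance $d$ from $z_r$ out of $I^+$ within $O(|\log d|)$ iterates. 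Since the mean gap between interactions blows up like $1/\mu(I_\varepsilon\times\T)$, the fiber coordinate has ample time to settle exponentially close to $z_a$ before the next kick, giving the claimed concentration.

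Putting everything together, for any $\phi\in C(\T^2\times\T)$,
\[
\int\phi\,d\nu_\varepsilon = \int_{\T^2}\!\!\int_\T \phi(x,y,z)\,d\nu_\varepsilon^{(x,y)}(z)\,d\mu \xrightarrow{\varepsilon\to 0} \int \phi(x,y,z_a)\,d\mu = \int\phi\,d(\mu\times\delta_{z_a}),
\]
which is the asserted weak convergence. Taking $\phi(x,y,z)=\log g'(z+r(x))$ (bounded and continuous since $g'\ge \lambda_g^{min}>0$) yields $\lambda^c(\nu_\varepsilon)\to \log g'(z_a)=\log\lambda_g^{min}<0$, so $\lambda^c(\nu_\varepsilon)<0$ for all $\varepsilon$ small enough. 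A u-Gibbs measure with a negative center exponent is an SRB measure (cf.\ Sect.~\ref{Sec:SRBandphys}); the $E_A^s$-exponent is automatically negative by Anosov contraction, and the $E^u$-exponent is positive by construction, so $\nu_\varepsilon$ has one positive and two negative Lyapunov exponents. The main obstacle will be the concentration step: mass could a priori pile up near the repelling fixed point $z_r$, which $g$ preserves exactly when $x\notin I_\varepsilon$. Ruling this out is where assumption (B3) enters essentially: the steep and monotone $r|_{I_\varepsilon}$ spreads any fiber measure across $\T$ after a kick with quasi-uniform density, so the Lebesgue probability of landing back in a small neighborhood of $z_r$ scales with its size; combined with the u-Gibbs absolute continuity of $\nu_\varepsilon$ along $W^u$, this squeezes the set of orbits that remain $z_r$-trapped to a $\nu_\varepsilon$-negligible set.
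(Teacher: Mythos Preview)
Your outline follows the template of Sections~\ref{Sect:RegMonotINtGeom}--\ref{Sec:RegMonotStatist}, but that template breaks down precisely because of assumption (B1). Since $\lambda_g^{max}>\max_p\|DA_p\|$, there are regions of phase space (near $z=z_r$) where the vertical expansion dominates the horizontal one, so there is \emph{no} global domination between $E^c$ and the direction you want to call $E^u$. This has two concrete consequences for your argument. First, the ``standard graph-transform argument'' you invoke does not converge: for a vector in your first-quadrant cone with slope $s=v_z/v_u$, one step gives $\bar s = (g'/\alpha^u)(\partial_u^A r + s)$, and on $\{z+r\in I^+\}$ the factor $g'/\alpha^u$ exceeds $1$, so slopes are not contracted and global $W^u(p;F)$ curves are not produced this way. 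Second, even along forward images of a fixed curve, the distortion bound \eqref{Eq:bndDist} does \emph{not} carry over verbatim: it relies on exponential backward contraction along $W^u$, which again needs domination. The paper flags this explicitly and therefore does not go through the u-Gibbs route at all; it appeals instead to Tsujii's criterion (Theorem~C of \cite{Tsujii}), verifying that the strongly stable set of any positive-measure set has positive Lebesgue measure using only the $W_A^s\times\T$ foliation, which is controlled by the Anosov base.

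Your concentration argument also has a real gap. You correctly identify the danger that mass could linger near $z_r$, and you point at (B3), but ``spreads any fiber measure across $\T$ after a kick with quasi-uniform density'' is not what happens: nothing prevents a curve from entering $I_\varepsilon\times\T$ already near the graph $\{z+r(x)=z_r\}$, where $r$ is flat in the sense relevant to the estimate. The paper replaces your ergodic/relaxation heuristic with a direct inductive geometric estimate on the forward images $\gamma_n=F^n\gamma_0$ of a single $A$-unstable segment in $\{z=0\}$: it proves that whenever $\gamma_n$ lands in the bad region $I^+$, its slope there satisfies $\partial_u^A\psi_n>\tfrac12\varepsilon^{-1}$ (condition~$(*)$), and the induction step is exactly where (B3) is used, via a three-case analysis depending on whether the previous position was already in $g(I^+)$. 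From $(*)$ one gets $m_{\gamma_0}(\mathbf{B}_n)=O(\varepsilon)$ uniformly in $n$, which is the quantitative input both for the negative center exponent and for the weak limit $\nu_\varepsilon\to\mu\times\delta_{z_a}$.
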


\smallskip

 The geometry of the maps satisfying (B1)-(B3) is somewhat more complicated than 
those studied earlier. The maps in Sects. \ref{Sect:RegMonotINtGeom} and \ref{Sec:RegMonotStatist} have a uniform domination condition on the entire phase space $\T^2 \times \T$.
Though there is ambiguous behavior in $E^c$, the directions corresponding to 
the strongest contraction and largest expansion are well separated. 
That is not so for the maps considered here: there are points whose strongest expansion 
occurs in the $z$-direction, and they are mixed with points with a negative
Lyapunov exponent in that direction. 

More precisely, consider the  subset $\Gamma_\varepsilon$ of the horizontal section
$\{z=z_r\}$ defined by
\[
\Gamma_\varepsilon := \{(x,y) \in \T^2: A^n(x,y) \not \in I_\varepsilon \mbox{ for all }n \ge 0\} 
\times \{z_r\}\ .
\]
Every $p \in \Gamma_\varepsilon$ has two expanding directions, the stronger expansion
occurring in the $z$-direction. This follows immediately from (B1) together with the fact that
$F(\Gamma_\varepsilon) \subset \Gamma_\varepsilon$.
Geometrically, $\Gamma_\varepsilon$ is a Cantor set together with its stable manifolds; 
for $\varepsilon$ small, this set is fairly dense in $\T^2\times\{z_r\}$. 
Theorem \ref{Thm:6}, on the other hand, asserts that on a set of positive 3D Lebesgue measure,
the Lyapunov exponent in the $z$-direction is negative. Moreover, as we will show,
 the unstable manifolds at $\nu_\varepsilon$-typical points  for an SRB measure $\nu_
\varepsilon$ crosses the horizontal section 
$\T^2\times \{z_r\}$ infinitely often.

 Because of the absence of a uniform domination condition on $\T^2 \times \T$,
we have less control on the geometry of the examples here than  in previous sections.
In particular, we do not know if the SRB property persists under small perturbations.

\subsection{Proof of Theorem \ref{Thm:6}}

\begin{proof}  We divide the proof into the following three steps.

\medskip \noindent
{\it 1. Construction of  invariant probability measures $\mu_\varepsilon$ with 
$\int \log |\partial_z F| d\mu_\varepsilon < 0$.}
Identifying $\T^2$ with $\T^2 \times \{0\}$, we let 
$\gamma_0$ be a segment of unstable manifold of $A$, 
and let $\gamma_n = F^n (\gamma_0)$. Let $\mu_0$ be the 
arclength measure on $\gamma_0$, normalized to have total measure $1$, and 
let $\mu_\varepsilon$ be any accumulation point  of $\frac1n \sum_{i=0}^{n-1} F^i_*\mu_0$. 

 We will show that for small  enough $\varepsilon$, $\mu_\varepsilon$
has the desired property.
 Let $f(x,y,z) = (x,y, z+r(x))$. We need to show that a large fraction of  the measure
$(\pi_z \circ f \circ F^n)_*\mu_0$ lies in $I^-$.
In the argument below we will make the simplifying assumption that $A$ is linear,
leaving it to the reader to insert the usual distortion estimates when it is not. 

The following notation will be useful: For each $n$, let $\theta_n: \pi_{xy}(\gamma_n) \to \T$
be such that the graph of $\theta_n$ is  $\gamma_n$, and let 
$\psi_n: \pi_{xy}(\gamma_n) \to \T$ be such that the graph of $\psi_n$ is $f(\gamma_n)$, 
i.e., $\psi_n = \theta_n + r$. 

At the $n$th step, let $\sigma_n \subset \gamma_n$ be a segment with $\pi_x\sigma_n = I_\varepsilon$. Define
\[
B_{\sigma_n, k} = \{q \in \sigma_n : g^i\pi_z f(q) \not \in I^- \mbox{ for } i=0, 1, 2, \cdots, k\}\ ,
\]
 the set of points that remain in the ``bad region" $k$ times under $g$.  We claim that there exist $C_1>1$ independent of $\varepsilon$ such that $m_{\pi_{xy}\sigma_n}(\pi_{xy}B_{\sigma_n, k})
< (C_1\varepsilon) c_1^k$.
 
 For $n=0$, $\sigma_0 \subset \gamma_0$ is a segment with $\pi_x\sigma_0 = I_\varepsilon$. We show that  $m_{\sigma_0}(B_{\sigma_0, k}) < (C_1\varepsilon) c_1^k$ where 
$m_{\sigma_0}$ is Lebesgue measure on $\sigma_0$ and $c_1<1$ is as at the
beginning of Sect. \ref{Sec:ResStrong}.
To see this, observe  that (i) $m(\pi_z f(B_{\sigma_0, k})) \le c_1^k$;
(ii) on $I^+$ the density of $(\pi_z \circ f)_*m_{\sigma_0}$
with respect to Lebesgue measure $m$ on $\T$
is $\mathcal O(\varepsilon)$, because $\partial_u^Ar > \frac12 \varepsilon^{-1}$ (see (B3));
and (iii) under iterates of $g$, points that leave $I^+$ enter $I^-$ a finite number of 
steps later.

For a general $n$, the same argument as for the case $n=0$ works, provided the condition 
$$
(*)  \hskip 0.5in \partial_u^A\psi_n > \frac12 \varepsilon^{-1} \quad \mbox{on} \quad \psi_n^{-1}(I^+)\ 
$$
is satisfied. Assuming that for now, we note that $\pi_{xy} \gamma_n$ has length 
$(\lambda^u)^n$ times that of $\gamma_0$ where $\lambda^u$ is the expansion constant of $A$,
so it crosses the strip $I_\varepsilon \times \T$  
$ \le \mbox{const} \ (\lambda^u)^n$ times.
At the same time, $m_{\pi_{xy}\sigma_n}=(\lambda^u)^{-n}A^n_*m_{\sigma_0}$,
so summing $m_{\pi_{xy}\sigma_n}(\pi_{xy}B_{\sigma_n, k})$ over all the components $\sigma_n$
of $\gamma_n$, one obtains for each $n$ the same estimate (up to a uniform factor)
as in the $0$th step.

To estimate $g'$ at the $n$th step, let
\[
{\bf B}_n := \{p \in \gamma_0: \pi_z f(F^n(p)) \not  \in I^-\}\ .\]
Assuming (*), and adding up all the points that fell into the bad set in any one of the previous 
 iterates and have remained there up until time $n$, we have  
\[
m_{\gamma_0}({\bf B}_n)  \le \mbox{const}   \sum_{0 \le k \le n}  (C_1 \varepsilon) c_1^{n-k}
\le \mbox{const} \  (C_1 \varepsilon) (1-c_1)^{-1}\ .
\]
In this estimate, we have overcounted in the following way: For $q \in \sigma_i$ for some $i$,
i.e., $\pi_x(q) \in I_\varepsilon$, if it remains in the ``bad set" for $j$ iterates and $F^j q \in \sigma_{i+j}$, 
then the estimate in step $i+j$ takes over and what happened at step $i$ becomes moot 
but we have continued to count it in the estimate above.

This last estimate implies that for any limit point $\mu$ of 
$\{\frac1n \sum_{i=0}^{n-1} F^i_*\mu_0\}_{n=1,2,\cdots}$, 
we have $g' \le c_0 < 1$ for a fraction $1-\mathcal O(\varepsilon)$ of the mass. Since
$g'$ is bounded, the claim is proved for $\varepsilon$ sufficiently small.

It remains to prove (*), and we will do that inductively. Assume (*) holds at step $n-1$. We consider
$\gamma_n$, and let $p \in \gamma_n$ be such that $\psi_n(\pi_{xy}p) \in I^+$ . There are
several possibilities for how that could have come about:

\medskip \noindent
Case 1. $\pi_x(p) \not \in I_\varepsilon$. In this case, $\theta_n(\pi_{xy}p) = \psi_n(\pi_{xy}p)
\in I^+$, which implies $\pi_z f (F^{-1}p) \in I^+$.
Condition (B1) ensures that
$\partial_u^A\psi_n(\pi_{xy}p) = \partial_u^A\theta_n(\pi_{xy}p) > \partial_u^A\psi_{n-1}(\pi_{xy} F^{-1}p)$, 
which by hypothesis is $>\frac12 \varepsilon^{-1}$.

\medskip \noindent
Case 2. $\pi_x(p) \in I_\varepsilon$ and $\theta_n(\pi_{xy}p) \in g(I^+)$. This also implies 
$\pi_z f F^{-1}p \in I^+$. For the same reason as above, $\partial_u^A\theta_n(\pi_{xy}p) > \frac12 \varepsilon^{-1}$, so $\partial_u^A\psi_n(\pi_{xy}p)$ can only be larger.

\medskip \noindent
Case 3. $\pi_x(p) \in I_\varepsilon$ and $\theta_n(\pi_{xy}p) \not \in g(I^+)$. 
Since $\psi_{n-1}(\pi_{xy}F^{-1}p) \not \in I^+$, we are guaranteed only that 
$\partial_u^A \theta_n(\pi_{xy}p) \ge 0$.
However, from $\theta_n(\pi_{xy}p) \not \in g(I^+)$ and $\psi_n(\pi_{xy}p) \in I^+$, 
it follows that $r(\pi_{xy}p) \not \in (-d, d)$ (see (B3) for the definition of $d$). Hence
$\partial_u^A \psi_n(\pi_{xy}p) \ge \partial_u^A r(\pi_{xy}p) > \frac12 \varepsilon^{-1}$ by (B3).

\medskip
This completes the proof of Item 1.

\bigskip \noindent
{\it 2. Existence of SRB measure.} Let $\mu_\varepsilon$ be as constructed. Since 
$\mu_\varepsilon(\T^2 \times I^-) = 1- \mathcal O(\varepsilon)$,
there is at least one ergodic component $\nu_\varepsilon$ of $\mu_\varepsilon$ with 
$\nu_\varepsilon(\T^2 \times I^-) = 1- \mathcal O(\varepsilon)$. It follows that
for $\varepsilon$ small enough, the Lyapunov exponent in the $z$-direction is strictly 
negative $\nu_\varepsilon$-a.e.  We claim that $(\pi_{xy})_*\nu_\varepsilon =\mu^A$
where $\mu^A$ is the SRB measure of the Anosov map $A$. This is because
$(\pi_{xy})_* (\frac1n \sum_0^{n-1} F^i_*\mu_0)$ converges to $\mu^A$, 
so $(\pi_{xy})_*\mu_\varepsilon=\mu^A$, and since $\mu^A$ is ergodic, we have 
$(\pi_{xy})_*\nu_\varepsilon = (\pi_{xy})_*\mu_\varepsilon$. 

We do not attempt the usual construction of SRB measures used for Axiom A attractors or 
in Section \ref{Sec:RegMonotStatist} due to technical issues with distortion estimates on 
unstable manifolds. 
Instead, we prove the SRB property of $\nu_\varepsilon$ by appealing to a result of \cite{Tsujii}.
Theorem C of \cite{Tsujii} asserts that an invariant probability measure $\nu$ with no zero Lyapunov
exponents is SRB if the following holds:

\bigskip
(*) \quad 
{\it for any Borel set $X$ with $\nu(X)>0$, the strongly stable set of $X$, 
\[
\Gamma^s(X):=\{y\in M:\; \limsup_{n\rightarrow\infty} \log d(f^nx,f^ny)<0 \mbox{ for some }x\in X \},
\] 

\qquad has positive Lebesgue measure.}

\bigskip
We verify condition (*) as follows: Fix $X$ with $\nu_\varepsilon(X)>0$. 
Shrinking $X$, we may assume it consists of $\nu_\varepsilon$-typical points,
so that at every $p \in X$, 
there is a 2-dimensional local stable manifold contained in 
$ W_A^s(\pi_{xy}p) \times \T$ where $ W^s_A(\cdot)$ is the stable manifold
of $A$. These local stable disks are contained in $\Gamma^s(X)$.

Let $\mathcal F$ be the 2-dimensional foliation of $\T^2 \times \T$
whose leaves are $ W_A^s \times \T$.
We fix a stack $\mathcal S$ of $\mathcal F$-disks with $\nu_\varepsilon(X \cap \mathcal S)>0$, 
i.e., we fix a local unstable manifold $W$ of $A$ 
and a collection of disks  $\{\mathcal F_\alpha, \alpha \in W\}$ where 
$\mathcal F_\alpha$ 
is a disk of radius $r$ in an $\mathcal F$-leaf centered at $\alpha \in W$, and let 
$\mathcal S = \cup_{\alpha \in W} \mathcal F_\alpha$.
As $(\pi_{xy})_*\nu_\varepsilon$ is the SRB measure of $A$, by the absolute continuity
of the stable foliation of $A$, any positive $\nu_\varepsilon$-measure 
subset of $\mathcal S$ projects to a positive Lebesgue measure set on $W$.

Now let $m$ denote 3D Lebesgue measure on $\T^2 \times \T$. 
Distintegrating $m$ into a family of
regular conditional probabilities $\{m_\alpha\}$ on $\{\mathcal F_\alpha\}$ and a transverse measure
$m_T$ on $W$, we have that for any Borel set $B \subset \T^2 \times \T$, 
\begin{equation} \label{disintegration}
m(B) = \int m_\alpha(B \cap \mathcal F_\alpha) dm_T(\alpha)\ .
\end{equation}
By the absolute continuity of the $\mathcal F$-foliation, which is an immediate
consequence of the absolute continuity of the stable foliation for Anosov maps, we have that
$m_T$ is equivalent to Lebesgue measure on $W$. As $m_\alpha(\Gamma^s(X))>0$
for a positive $m_T$-measure set of $\alpha$, we conclude that $m(\Gamma^s(X))>0$.

\bigskip \noindent
{\it 3. The $\varepsilon \to 0$ limit.} Choosing $I^-$ arbitrarily small, we have  
$\mu_\varepsilon(\T^2 \times I^-) = 1- \mathcal O(\varepsilon)$ as $\varepsilon \to 0$
though the constants depend on the size of $I^-$. 
The assertion follows.
\end{proof}

 Examples of coupled systems with rare interactions in the literature include
the coupled maps lattices of \cite{keller2009map}, and statistical mechanics models such as 
\cite{bunimovich1992ergodic}, \cite{balint2017limiting} and \cite{eckmann2006nonequilibrium}.


\section{(Straightforward) Generalizations}\label{Sec:Generalizations}

For conceptual clarity, we have chosen to present our results 
for the coupling of two specific systems:
an Anosov diffeomorphism of $\T^2$ and a circle map with a sink and a source.
We now discuss generalizations that are either already known, or whose proofs 
require only minor, nonsubstantive modifications of those  
 in Sections \ref{Sec:SmallInte}-\ref{Sec:RareStrongInterac}.

\bigskip 
Theorem \ref{Thm:SmallPert} is valid for any diffeomorphism $F$ of
a manifold of any dimension with a uniformly hyperbolic attractor and the fiber map $g$ 
can be any diffeomorhphism with a sink -- except that the basin of the attractor
 need not have full Lebesgue measure. This is not new \cite{robinson1976structural, ruelle1976measure, young1990large}.
  In the case where $A$ is an Anosov diffeomorphisms, 
the persistence of the attractor as an invariant manifold in the normally hyperbolic case is also well known \cite{hirsch1977invariant}.

\bigskip
The results of Sections \ref{Sect:RegMonotINtGeom}-\ref{Sec:RareStrongInterac} are easily extended to the following situations: The base map $A$ 
can be a uniformly expanding circle map (working with  inverse limits); or it can be an Anosov diffeomorphism or any diffeomorphism with an Axiom A attractor.
The dimension of the base manifold is irrelevant, but our proofs as given rely on
the fact that $\dim(E^u)=1$, and  it is crucially important that $r$ increases 
monotonically along unstable curves. 

An example that is especially interesting geometrically
is when the base map $A: \mathbb D_2 \times \T \circlearrowleft$ (where $\mathbb D_2$ is a 
2-dimensional disk) has a solenoidal attractor, and $r$ increases monotonically as one
winds around the $\T$-component of the base. Here we obtain an attractor in the 
4-dimensional phase space $(\T^2 \times \T) \times \T$.
Under the conditions of Sections \ref{Sect:RegMonotINtGeom} and 5, we have a ``double-frequency 
solenoid" whose $W^u$-curves wind around the two-torus in its $3$rd and $4$th
dimensions in a fairly regular manner. Under the assumptions of Section \ref{Sec:RareStrongInterac}, 
$W^u$-curves wind around the 3rd dimension in 
a regular fashion making abrupt excursions around the 4th.
In both cases, the attractor so obtained is 
not uniformly hyperbolic as it has mixed behavior in the 4th dimension.

\bibliographystyle{abbrv}

\bibliography{Untitled}

\bigskip

\noindent
\emph{E-mail address:} matteo.tanzi@nyu.edu

\noindent
\emph{E-mail address:} lsy@cims.nyu.edu

\end{document}